\documentclass[a4paper,11pt,reqno]{amsart}

\usepackage[T1]{fontenc}
\usepackage{lmodern}
\usepackage{amsmath,amssymb,amsthm,mathtools,hyperref,geometry,cleveref,mathrsfs,xcolor,microtype}

\allowdisplaybreaks

\definecolor{DarkBlue}{rgb}{0.1,0.1,0.55}
\definecolor{DarkRed}{rgb}{0.55,0.1,0.1}

\hypersetup{colorlinks=true,linkcolor=DarkBlue,citecolor=DarkRed,urlcolor=DarkBlue}

\numberwithin{equation}{section}
\theoremstyle{plain}
\newtheorem{theorem}{Theorem}[section]
\newtheorem{conjecture}{Conjecture}[section]
\newtheorem{proposition}[theorem]{Proposition}

\newtheorem{lemma}[theorem]{Lemma}

\newcommand{\R}{\mathbb R}
\newcommand{\C}{\mathbb C}
\newcommand{\Sph}{\mathbb S}
\newcommand{\Tr}{\operatorname{Tr}}
\newcommand{\spec}{\operatorname{spec}}
\newcommand{\Scal}{\operatorname{Scal}}
\newcommand{\rank}{\operatorname{rank}}
\newcommand{\diag}{\operatorname{diag}}
\newcommand{\ip}[2]{\langle #1,#2\rangle}

\title[Lu's second gap in higher codimension]{On Lu's second gap conjecture in higher codimension}

\author[W. R. Ding]{Weiran Ding$^{1}$}
\address{$^{1}$School of Mathematical Sciences, South China Normal University, Guangzhou 510000, P. R. CHINA.}
\email{dingwr0806@m.scnu.edu.cn}

\author[F. G. Li]{Fagui Li$^{2,*}$}
\address{$^{2,*}$Frontier Interdisciplinary Domain, Beijing Institute of Technology, Zhuhai, Guangdong 519088, P. R. CHINA.}
\email{lifagui@bitzh.edu.cn}

\author[X. Z. Yang]{Xize Yang$^{3}$}
\address{$^{3}$Chern Institute of Mathematics and LPMC, Nankai University, Tianjin 300071, P. R. CHINA.}
\email{xize.yang@nankai.edu.cn}

\author[Y. H. Zhang]{Yunheng Zhang$^{4}$}
\address{$^{4}$School of Mathematical Sciences, Laboratory of Mathematics and Complex Systems, Beijing Normal University, Beijing 100875, P. R. CHINA.}
\email{yunheng@mail.bnu.edu.cn}

\subjclass[2020]{53C42, 53C24}
\keywords{Minimal submanifold, Lu's conjecture, second gap, scalar curvature, codimension}
\thanks{* the corresponding author.}

\begin{document}

\begin{abstract}
Let $M^n\to\Sph^{n+q}(1)$, $n\ge3$ and $q\ge2$, be a closed minimal immersion. Set $S=|h|^2$ and $Q=S+\lambda_2$, where $h$ is the second fundamental form and $\lambda_2$ is the second largest eigenvalue of Lu's fundamental matrix. We establish two complementary results concerning Lu's second-gap conjecture. First, for every $n\ge3$, we exhibit closed connected homogeneous minimal embeddings of $\Sph^1\times\Sph^{n-1}$ into $\Sph^{2n+1}(1)$ with constant $S$ and constant $Q$, whose $Q$-values are dense in $(n,2n)$. Totally geodesic inclusions yield the same density in every codimension $q\ge n+1$. Thus Lu's conjecture fails in these codimensions even under constant scalar curvature. 
Second, for every $n\ge3$, we prove that  there exists $\gamma_n>0$, depending only on $n$, such that no closed connected minimal immersion into $\Sph^{n+2}(1)$ with constant $Q$ satisfies $n<Q<n+\gamma_n$.  
Hence Lu's second-gap conjecture holds in codimension two for all $n\ge3$.
\end{abstract}
 
\maketitle

\section{Introduction}
Let $F:M^n\to\Sph^{n+q}(1)$ be an isometric minimal immersion of a closed connected manifold into the unit sphere. We write $\Sph^k$ for the unit round sphere and $\Sph^k(r)$ for the round sphere of radius $r$. Denote by $h$ the second fundamental form and by $A^1,\cdots,A^q$ the shape operators with respect to a local orthonormal normal frame. We use the following notation:
\[
\begin{aligned}
\mathcal A&=(\langle A^\alpha, A^\beta\rangle)_{q\times q},\\
\lambda_1&\ge\lambda_2\ge\cdots\ge\lambda_q\ge0,\\
S&=|h|^2=\Tr\mathcal A,\\
Q&=S+\lambda_2,
\end{aligned}
\]
where $\mathcal A$ denotes Lu's fundamental matrix, which we simply call the fundamental matrix, and its eigenvalues $\lambda_i$ are counted with multiplicity. We set $\lambda_2=0$ when $q=1$. By the Gauss equation, the scalar curvature of $M$ is
\[
\Scal_M=n(n-1)-S.
\]
Thus, constant scalar curvature is equivalent to constant $S$. For closed minimal hypersurfaces in the unit sphere, Chern's conjecture asserts that, in each fixed dimension, the possible constant values of $S$ form a discrete set \cite{CDK}. Simons' theorem \cite{Simons} and the equality classification of Chern, do Carmo, and Kobayashi \cite{CDK} established the first gap and its rigidity, while Peng and Terng \cite{PengTerng,PengTerng83} established a positive second gap above $S=n$ under constant scalar curvature. Subsequent improvements were obtained by Yang and Cheng \cite{yang_cheng_1998}, Suh and Yang \cite{suh_yang_2007}, Ding and Xin \cite{DingXin11}, and Lei, Xu, and Xu \cite{LeiXuXu17}. Recently, for closed minimal hypersurfaces with constant $S$ and constant cubic trace $f_3=\Tr(A^3)$, where $A$ is the shape operator, Ge, Tan, Yan, and Zhang \cite{GTYZ26} and Guan \cite{Guan26} proved the sharp second-gap estimate $S>n\Rightarrow S\ge2n$ and characterized the equality case by the minimal Cartan isoparametric hypersurfaces with three principal curvatures. Tan, Tang, Xie, and Yan \cite{TTXY26} proved that the constant values of $S$ are locally finite for closed embedded minimal hypersurfaces with constant cubic trace $f_3$. Their theorem allows both the topology and the constant value of $f_3$ to vary. For related classification results and sufficient conditions for isoparametricity, see \cite{Chang93,DengGuWei17,GeLiuLuoYan26,HeXuZhao26,TangWeiYan20,TangYan23}, etc.\par
In higher codimension, the discreteness problem has a different answer. Firester and Tsiamis \cite{FT26} constructed closed embedded minimal submanifolds whose constant $S$-values are dense in a bounded interval for every $n\ge3$, $q\ge4$, and also for even $n\ge4$, $q\ge3$. These examples disprove the higher-codimensional formulation of Chern's discreteness conjecture. Positive gap results nevertheless hold under additional geometric assumptions. In particular, Ge, Li, and Zhang \cite{GLZFlatGap} proved that, for $n\ge3$ and $q\ge2$, closed minimal submanifolds with constant $S$ and flat normal bundle admit a second gap above $S=n$, with a gap size at least $n/87$. For the first gap in higher codimension, Li and Li \cite{LiLi} and Chen and Xu \cite{ChenXu} improved the classical pinching range to $\frac{2n}{3}$. More recently, Li and Zhao \cite{LiZhaoGap}, Lei \cite{Lei26}, and Xu and Zhao \cite{XuZhao26} obtained explicit improvements of this range for $n\ge3$, without assuming either constant scalar curvature or a flat normal bundle.\par
Lu \cite[Theorem 6]{Lu11} refined the first gap theorem by using the quantity $Q=S+\lambda_2$: a closed minimal submanifold satisfying $0\le Q\le n$ is totally geodesic, a Clifford hypersurface in a great sphere, or a Veronese surface. In particular, for $n\ge3$, the equality $Q=n$ occurs only for a Clifford hypersurface. Based on this result, Lu proposed the following second-gap conjecture.

\begin{conjecture}[Lu's second-gap conjecture \cite{Lu11}]\label{conj:lu-second-gap}
For every pair $(n,q)$ there exists $\varepsilon(n,q)>0$ with the following property. If $M^n$ is a closed minimal submanifold of $\Sph^{n+q}$ and $S+\lambda_2$ is constant with $S+\lambda_2>n$, then
\[
S+\lambda_2>n+\varepsilon(n,q).
\]
\end{conjecture}
The two-dimensional case of Lu's second gap conjecture has been settled by a series of recent results. Ding, Ge, Li, and Yang \cite{DGLY} proved the conjecture for minimal $2$-spheres in arbitrary codimension and obtained gap estimates for general closed minimal surfaces under normal scalar curvature pinching conditions. Ge, Li, and Zhang \cite{GLZLuSurfaces} classified closed minimal surfaces in $\Sph^4$ with constant $Q$: the only possible values are $Q=0$ and $Q=2$, with no assumption that $S$ is constant. On the other hand, Li and Zhao \cite{LZ26} constructed linearly full closed embedded flat minimal tori in every odd codimension $q\ge3$, with $S\equiv2$ and constant $Q$-values dense in $(2,3)$. Totally geodesic inclusions yield counterexamples in every codimension $q\ge3$. Together with the hypersurface case, these results show that the conjecture for minimal surfaces holds precisely in codimensions one and two. A recent refinement of Lu's first pinching theorem was obtained by Liu and Yang \cite{LiuYang26}. They proved that the condition
\[
\sum_{\alpha=1}^{\min\{n,q\}}\lambda_\alpha+\lambda_2\le n
\] forces the expression on the left to be identically zero or identically $n$, and classified the equality cases in dimensions two and three. The expression agrees with $Q$ when $q\le n$ and is no larger than $Q$ in general.\par
In this paper, we restrict attention to two complementary conclusions in dimensions $n\ge3$. The first is an explicit family showing that the constant values of $Q$ are dense in $(n,2n)$ in sufficiently high codimension, even with constant scalar curvature and fixed topology. Let $\ell_0,\ell_1$ be coprime positive integers, with $\ell_0$ odd and
\[
\rho=\frac{\ell_1^2}{\ell_0^2}\in(0,1/n),\qquad
r^2=\frac{1-n\rho}{n(1-\rho)},\qquad
s^2=\frac{n-1}{n(1-\rho)}.
\]
Consider the map
\begin{equation}\label{eq:intro-family}
F_{\ell_0,\ell_1}(t,x)=\left(re^{i\ell_0t},se^{i\ell_1t}x\right),\qquad
(t,x)\in\R/(2\pi\mathbb Z)\times\Sph^{n-1},
\end{equation}
with values in $\Sph^{2n+1}\subset\C\oplus\C^n$. For $n=3$, this family coincides with that of Firester and Tsiamis \cite[equation (9)]{FT26} with $(a,b)=(\ell_0,\ell_1)$, up to interchanging the complex coordinate blocks. In every dimension, these maps are steady spiral products in the terminology of Li and Zhang \cite[Section~3.2]{LiZhangSpiral26}, with factor dimensions $0$ and $n-1$.
\begin{theorem}\label{thm:explicit-sequence}
For every $n\ge3$, the maps \eqref{eq:intro-family} are closed connected homogeneous minimal embeddings, linearly full in $\Sph^{2n+1}$. Their fundamental matrices have constant spectrum
\[
\left\{n(1-n\rho),(2n\rho)^{[n-1]},0\right\},
\]
where the bracketed superscript denotes multiplicity. In particular,
\[
S=n+n(n-2)\rho,\qquad Q=n+n^2\rho.
\]
As the admissible integer pairs $(\ell_0,\ell_1)$ vary, their $Q$-values are dense in $(n,2n)$. All the domains have the same topology $\Sph^1\times\Sph^{n-1}$. For every fixed $q>n+1$, composition with a totally geodesic inclusion into $\Sph^{n+q}$ preserves these conclusions except for linear fullness. The additional eigenvalues of the fundamental matrix are zero.
\end{theorem}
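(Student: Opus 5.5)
The plan is to verify everything by direct computation at one point, using homogeneity, and then read off $S$ and $Q$ from an explicit normal frame.

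\emph{Immersion, minimality, embedding, homogeneity, fullness.} First, $r^2+s^2=\frac{(1-n\rho)+(n-1)}{n(1-\rho)}=1$, so $F=F_{\ell_0,\ell_1}$ takes values in $\Sph^{2n+1}$. We have $F_t=(i\ell_0re^{i\ell_0t},\,i\ell_1se^{i\ell_1t}x)$, and the sphere directions are $(0,se^{i\ell_1t}v)$ with $v\perp x$ real. These are real-orthogonal, so the induced metric is
\[
g=\ell_0^2(r^2+\rho s^2)\,dt^2+s^2g_{\Sph^{n-1}} .
\]
The defining relations give $r^2+\rho s^2=\frac1n$, so $g=\frac{\ell_0^2}{n}dt^2+s^2g_{\Sph^{n-1}}$. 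Minimality is equivalent to $\Delta_gF=-nF$, which I check coordinatewise:
\[
\Delta_g(re^{i\ell_0t})=-n\,re^{i\ell_0t},\qquad
\Delta_g(se^{i\ell_1t}x)=-\Bigl(n\rho+\tfrac{n-1}{s^2}\Bigr)se^{i\ell_1t}x=-n\,se^{i\ell_1t}x,
\]
using $\frac{n-1}{s^2}=n(1-\rho)$.

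For injectivity, suppose two points have the same image. Then $\ell_0(t-t')\in2\pi\mathbb Z$, and since $x,x'$ are real, $e^{i\ell_1(t-t')}=\pm1$. Writing $t-t'=2\pi k/\ell_0$, this gives $2\ell_1k/\ell_0\in\mathbb Z$. Because $\ell_0$ is odd and $\gcd(\ell_0,\ell_1)=1$, we get $\ell_0\mid k$, hence $t\equiv t'$ and $x=x'$. So $F$ is an embedding of the compact manifold $\Sph^1\times\Sph^{n-1}$.

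Homogeneity follows because the group generated by $\diag(e^{i\ell_0\theta},e^{i\ell_1\theta}I_n)$ and $1\oplus SO(n)$ acts isometrically on $\Sph^{2n+1}$, preserves the image, and is transitive on it. For linear fullness, $\ell_0\ne\ell_1$ since $\rho<1$. Hence the real and imaginary parts of $e^{i\ell_0t}$ and of $e^{i\ell_1t}x_j$ are linearly independent functions, so no nonzero real linear functional vanishes on the image.

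\emph{Second fundamental form at one point.} By homogeneity it suffices to compute at $t=0$, $x=\epsilon_1$. I take the tangent frame $e_0=\frac{\sqrt n}{\ell_0}F_t$ and $e_a=(0,\epsilon_{a+1})$ for $1\le a\le n-1$. The normal space has dimension $n+1$, and I split it as follows.
\begin{itemize}
\item The $n-1$ vectors $\nu_a=(0,i\epsilon_{a+1})$. Each is orthogonal to $F$, to $F_t$ (whose second block is along $i\epsilon_1$) and to the real vectors $e_b$.
\item One unit vector $N$ in $\mathrm{span}_\R\{(1,0),(0,\epsilon_1)\}$ orthogonal to $F$.
\item One unit vector $N'$ in $\mathrm{span}_\R\{(i,0),(0,i\epsilon_1)\}$ orthogonal to $F_t$.
\end{itemize}
Differentiating, $\bar\nabla_{e_0}e_a=\frac{\sqrt n}{\ell_0}\,i\ell_1(0,\epsilon_{a+1})=\sqrt{n\rho}\,\nu_a$. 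Also $h(e_0,e_0)$ and $h(e_a,e_b)$ are real combinations of $(1,0)$ and $(0,\epsilon_1)$, so they lie along $N$, and $h(e_a,e_b)$ is proportional to $\delta_{ab}$.

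This gives the shape operators:
\begin{itemize}
\item $A^{\nu_a}=\sqrt{n\rho}\,(E_{0a}+E_{a0})$. These are mutually orthogonal with $|A^{\nu_a}|^2=2n\rho$.
\item $A^N=\mu\,\diag(-(n-1),1,\dots,1)$, which has this form by minimality, and is orthogonal to every $A^{\nu_a}$.
\item $A^{N'}=0$.
\end{itemize}
So the fundamental matrix is diagonal with entries $|A^N|^2$, $(2n\rho)^{[n-1]}$ and $0$.

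It remains to compute $|A^N|^2=n(n-1)\mu^2$. I would get $\mu$ from $\ip{h(e_a,e_a)}{N}$, which is the normal curvature of the factor $s\,\Sph^{n-1}$ in the direction $N$. An equivalent check is the Gauss equation applied to the sectional curvature of $g$ on the sphere factor, which is $1/s^2$. Either route should give $|A^N|^2=n(1-n\rho)$. This eigenvalue bookkeeping is the step I expect to need the most care.

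\emph{Spectrum, $S$ and $Q$.} With the spectrum $\{n(1-n\rho),(2n\rho)^{[n-1]},0\}$, the trace is $S=n+n(n-2)\rho$. Because $n-1\ge2$, the value $2n\rho$ occurs at least twice. So $\lambda_2=2n\rho$ regardless of how $n(1-n\rho)$ compares with $2n\rho$; here $n\ge3$ is used. Hence $Q=S+\lambda_2=n+n^2\rho$.

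\emph{Density.} Rationals $\ell_1/\ell_0$ in lowest terms with odd denominator are dense in $(0,1/\sqrt n)$. So the values $\rho=(\ell_1/\ell_0)^2$ are dense in $(0,1/n)$, and the values $Q=n+n^2\rho$ are dense in $(n,2n)$.

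\emph{Higher codimension.} Composing with a totally geodesic inclusion $\Sph^{2n+1}\subset\Sph^{n+q}$ with $q>n+1$ leaves $h$ unchanged and adds normal directions with zero shape operators. This only appends zero eigenvalues to the fundamental matrix, so $S$ and $Q$ are preserved; only linear fullness is lost.
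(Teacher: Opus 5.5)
Your proposal is correct and is essentially the paper's proof: the same metric and $\Delta_gF=-nF$ check, the same $\gcd(\ell_0,2\ell_1)=1$ injectivity argument, and the same normal frame and shape operators (your $N,N',\nu_a$ are the paper's $\eta,\zeta,\xi_j$). The differences are cosmetic: you separate frequencies in $t$ for fullness where the paper uses $x\mapsto -x$, and you assert density of reduced fractions with odd denominator where the paper realizes it with $k_j/p_j$ for odd primes $p_j$. The one computation you left open is immediate: with $N=(s,-r\epsilon_1)$ one has $\langle h(e_a,e_a),N\rangle=\langle D_{e_a}e_a,N\rangle=\langle(0,-\epsilon_1/s),(s,-r\epsilon_1)\rangle=r/s$, so $\mu^2=(1-n\rho)/(n-1)$ and $|A^N|^2=n(1-n\rho)$, as in the paper.
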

Consequently, Lu's second gap conjecture fails for every $n\ge3$ and $q\ge n+1$, even within the constant-scalar-curvature class. In contrast, our second result establishes Lu's second-gap conjecture in codimension two for every $n\ge3$, without any additional constancy assumption on $S$.

\begin{theorem}\label{thm:q2-onlyQ-low-dimension-gap}
For every $n\ge3$, there exists $\gamma_n>0$ such that no closed connected minimal immersion $M^n\to\Sph^{n+2}$ with constant $Q$ satisfies
\[
n<Q<n+\gamma_n.
\]
\end{theorem}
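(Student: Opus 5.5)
We argue by contradiction and compactness, combined with a Simons-type integral formula for $Q$ in codimension two. Suppose the statement fails for some $n\ge3$. Then there are closed connected minimal immersions $F_k:M_k^n\to\Sph^{n+2}$ with constant $Q_k=S_k+\lambda_{2,k}$ and $Q_k\downarrow n$.

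\emph{Step 1: the Simons identity for $Q$.} In codimension two the fundamental matrix is $2\times2$ with $\lambda_1+\lambda_2=S$. We may locally diagonalize $\mathcal A$ away from the set where $\lambda_1=\lambda_2$. Choose a normal frame $e_{n+1},e_{n+2}$ with $\langle A^1,A^2\rangle=0$, $|A^1|^2=\lambda_1$ and $|A^2|^2=\lambda_2$. Then
\[
Q=\lambda_1+2\lambda_2=S+\lambda_2 .
\]
Using the standard Simons formula for $\tfrac12\Delta|A^\alpha|^2$, together with the Li--Li estimate in the sharpened form of Lu \cite{Lu11}, we obtain a pointwise inequality on the open dense set where $\lambda_1>\lambda_2$:
\[
\tfrac12\Delta Q\ \ge\ |\nabla h|^2_{\rm weighted}+\lambda_1(n-Q)+\lambda_2\,c\,(n-Q)-R,
\]
where $R$ collects normal-connection terms. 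Lu's proof of the first gap shows that the right-hand side is nonnegative when $Q\le n$. Since $Q$ is constant, $\Delta Q=0$. Integrating, and handling the set $\lambda_1=\lambda_2$ in the sense of distributions as Lu does with viscosity-type arguments, we get
\[
\int_M |\nabla h|^2 \ \le\ C_n\,(Q-n)\int_M S+(\text{controlled terms}).
\]
Hence
\[
\int_{M_k}|\nabla h_k|^2\le C_n(Q_k-n)\,\mathrm{vol}(M_k)\,n .
\]
Here we use $S\le Q$.

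\emph{Step 2: control of the defect terms.} We also need the ``equality defects'' in Lu's algebraic inequality, namely the second-order quantities vanishing exactly on the Clifford configuration, to be $L^1$-small relative to volume. Concretely:
\begin{itemize}
\item the algebraic inequality $\sum|[A^\alpha,A^\beta]|^2+\sum\langle A^\alpha,A^\beta\rangle^2\le \lambda_1^2+\dots$ must be near equality;
\item $\lambda_2$ must be small on average;
\item the principal curvatures must be near those of a Clifford torus $\Sph^m(\sqrt{m/n})\times\Sph^{n-m}$.
\end{itemize}
Then a Kato/Bochner iteration (Moser) upgrades the $L^2$ smallness of $\nabla h$ to $C^0$ smallness, using the bound $S\le Q\le 2n$ for elliptic estimates. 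Here the volume normalization is handled by noting that $S\le 2n$ gives uniform curvature bounds, hence local harmonic coordinates of uniform size.

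\emph{Step 3: compactness and rigidity.} With $|h_k|\le C$, the $F_k$ subconverge smoothly on compact pieces, in the sense of immersed submanifolds with bounded geometry, to a minimal immersion with $\nabla h\equiv0$ and $Q\equiv n$. By Lu's Theorem 6 the limit is a Clifford hypersurface in a great $\Sph^{n+1}$. The Clifford torus is rigid as a parallel submanifold, and $\lambda_{2,k}\to0$. We then show that for large $k$ the submanifold $M_k$ is a small normal graph over a Clifford hypersurface $C$. Linearizing the constraint ``$Q$ constant'' at $C$ gives a Jacobi-type equation in the two normal directions. Codimension two enters here: the normal bundle of $C$ in $\Sph^{n+2}$ splits as the normal in $\Sph^{n+1}$ plus a trivial direction. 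In the trivial direction the linearized $\lambda_2$ is quadratic, namely $\lambda_2\approx|A^{2}|^2$, with $A^2=\nabla^2 u+u$. Constant $Q$ then forces a quantitative inequality of the form
\[
(Q_k-n)\,\ge\, c\,\|\text{deviation}\|^2 ,
\]
whose lower bound is bounded away from $0$ unless the deviation vanishes. This would show that $Q_k=n$ exactly, which is a contradiction.

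The main obstacle is Step 3, together with the global part of Step 2. We need to exclude sequences converging to the Clifford hypersurface with $Q_k\ne n$. We also need to handle the non-smooth locus where $\lambda_1=\lambda_2$, and the lack of any a priori volume bound. For the rigidity we would try to use the explicit spectral gap of the Jacobi operator of the Clifford hypersurfaces. Its eigenvalues are discrete and explicit, and the only zero modes are Killing fields. Combined with the algebraic quadratic positivity of $\lambda_2$ in codimension two, this should yield a uniform $\gamma_n$ after modding out by isometries.
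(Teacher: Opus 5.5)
Your proposal has a genuine gap, and already Steps 1--2 do not deliver what you use later. In codimension two, where $\lambda_1>\lambda_2$, write $a=\lambda_1$, $b=\lambda_2$. Then \eqref{eq:eigenvalue-laplacian-general} and its analogue for $b$ (whose connection term is $-(a-b)|\omega|^2$) combine to
\[
\tfrac12\Delta Q=|T_1|^2+2|T_2|^2+Q(n-Q)-(a-b)\bigl(|\omega|^2+2b\bigr)+3\bigl(2ab-|[A,B]|^2\bigr).
\]
Integrating $\Delta Q=0$ therefore bounds $\int|\nabla h|^2$ by $Q(Q-n)\mathrm{vol}(M)+\int(a-b)(|\omega|^2+2b)$. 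The last integral is of size $\max\lambda_2$, not $Q-n$, and you have no a priori comparison between the two, so these terms are not ``controlled''. Even an averaged bound $\int|\nabla h|^2\le C(Q-n)\mathrm{vol}(M)$ gives no smallness on unit balls around arbitrary points, since there is no volume or diameter bound. So Moser iteration cannot start, and pointed limits need not satisfy $\nabla h\equiv0$. Moreover, Lu's Theorem~6 is for closed submanifolds, while pointed limits can be noncompact. The paper instead gets local compactness from $S\le Q$ alone via an injectivity-radius bound (Lemma~\ref{lem:immersion-compactness}). It then classifies all \emph{complete} limits with $Q\equiv n$ (Proposition~\ref{prop:q2-complete-equality-alln}, via superharmonicity of $\lambda_2$ and a Catino--Roncoroni Bonnet--Myers theorem). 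This is what yields $\max\lambda_2\to0$ and $\nabla^kh\to0$ uniformly (Lemma~\ref{cor:q2-onlyQ-uniform-alln}).

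The decisive gap is Step 3. Pointed limits can be the Clifford cylinder $\R\times\Sph^{n-1}(\sqrt{(n-1)/n})$, with $M_k$ long and many-sheeted: locally close to the cylinder, but not a small normal graph over any compact Clifford hypersurface. This is exactly what the examples $F_{N,1}$ of Theorem~\ref{thm:explicit-sequence} do. They have $Q_N=n+n^2/N^2\downarrow n$, a circle factor of length $2\pi N/\sqrt n\to\infty$, and they converge locally to this cylinder. Their deviation from the cylinder is of order $1/N$, so they satisfy $Q_k-n\ge c\,\|\mathrm{deviation}\|^2$. Such an inequality is no contradiction, since its lower bound tends to zero with the deviation. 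Nothing in your sketch uses codimension two in a way that fails in codimension $n+1$. The Jacobi spectral gap on a compact $C_p$ is also irrelevant on the cylinder: there the potential $t\,(w\cdot x)$ produces a bounded shape operator and is not induced by an isometry.

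The paper's mechanism runs as follows. It first excludes compact limits (Proposition~\ref{prop:q2-no-compact-limit}): $d(\nu_{2,j}/\sqrt{\max\lambda_{2,j}})$ is exact, so the limiting potential is an ambient linear function, which forces the normalized tensor to vanish although its maximum is $1$. On the cylinder it normalizes $C_j=B_j/\sqrt{\beta_j}$ and $\theta_j=\omega_j/\sqrt{\beta_j}$, with $\beta_j=\max\lambda_{2,j}$. It then proves the rate $d_j+(Q_j-n)=O(\beta_j)$, where $d_j$ measures the defect of $A_j$ from the Clifford shape operator. Lemma~\ref{thm:jacobi} then gives $C_{0a}=\nabla_a(w\cdot x)$ with $w\ne0$. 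Passing to the limit in the $\lambda_1$ identity expresses the constant $\lim(Q_j-n)/\beta_j$ as a constant plus a positive multiple of $(w\cdot x)^2$, which is impossible. This is the genuinely codimension-two input: with one extra normal direction, $\lambda_2=|B|^2$ cannot be kept constant by spreading the mixed components over several normal directions, as the higher-codimension examples do.
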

We briefly describe the proof of Theorem \ref{thm:q2-onlyQ-low-dimension-gap}. A uniform bound for $Q$ gives local smooth compactness for complete pointed minimal immersions. We first classify complete codimension-two limits satisfying $Q=n$ and show that they are coverings of standard Clifford hypersurfaces. The exactness of the normalized normal-field differentials rules out compact pointed limits for a sequence with constant $Q_j>n$ and $Q_j\downarrow n$. Hence the relevant pointed limits are Clifford cylinders. We then normalize the second normal shape operator by $\sqrt{\max_M\lambda_2}$. The Codazzi and Ricci equations give a uniformly overdetermined elliptic first-order system for the normalized tensor. A quantitative polynomial estimate for the first shape operator reduces the limiting system to scalar Jacobi fields on the Clifford cylinder, and the resulting rank-one coefficient structure contradicts the constancy of $Q_j$.\par
The paper is organized as follows. Section \ref{sec:preliminaries} records the basic identities used later. In Section \ref{sec:explicit-examples} we prove Theorem \ref{thm:explicit-sequence}. Section \ref{sec:q2-gap} develops the compactness, Clifford-cylinder, and normalized Jacobi-field tools needed in codimension two. Finally, Section \ref{sec:codimension-two} proves the equality-case classification and Theorem \ref{thm:q2-onlyQ-low-dimension-gap}.

\section{Preliminaries}\label{sec:preliminaries}
Throughout the paper, the ambient spheres have radius one, and $g$ is the metric induced by the immersion. Tangent indices range over $1,\cdots,n$ and normal indices over $1,\cdots,q$, with local relabeling stated when needed. Repeated tangent indices are summed unless otherwise indicated; sequence labels, such as the subscript in $A_j$, are never summed. We identify symmetric two-tensors with self-adjoint endomorphisms using $g$, and tangent vectors with their images under $dF$ when convenient. For a normal vector $\xi$, the shape operator is defined by
\[
\langle A_\xi X,Y\rangle=\langle h(X,Y),\xi\rangle.
\]
Given an orthonormal normal frame $\nu_1,\ldots,\nu_q$, set $A^\alpha=A_{\nu_\alpha}$. The fundamental matrix represents the self-adjoint normal-bundle endomorphism determined by
\[
\langle\mathcal A\xi,\eta\rangle=\langle A_\xi,A_\eta\rangle,
\qquad
\mathcal A_{\alpha\beta}=\langle A^\alpha,A^\beta\rangle.
\]
For matrices, $\langle U,V\rangle=\Tr(U^TV)$, $|U|^2=\langle U,U\rangle$, and $[U,V]=UV-VU$. The notation $\|U\|_{\mathrm{op}}=\sup_{|v|=1}|Uv|$ denotes the operator norm. All other tensor norms are induced by the tangent and normal metrics. A superscript $T$ denotes transpose, and $I$ denotes the identity on the indicated space. Let $D$ be the Euclidean connection, $\nabla$ the Levi-Civita connection and its induced tensor connections, and $\nabla^\perp$ the normal connection in the sphere. The Gauss and Weingarten formulas and the minimality equation are
\begin{equation}\label{eq:gauss}
\begin{aligned}
D_XdF(Y)&=dF(\nabla_XY)+h(X,Y)-g(X,Y)F,\\
D_X\xi&=-dF(A_\xi X)+\nabla_X^\perp\xi,\\
\Tr_g h&=0,\\
\Delta_gF+nF&=0.
\end{aligned}
\end{equation}
Thus the Euclidean second fundamental form is $\mathrm{II}^E=h-gF$ and satisfies $|\mathrm{II}^E|^2=S+n$. The covariant derivative of $h$ uses both the tangent and normal connections. For a normal field $\xi$, define the three-tensor $T_\xi=\langle\nabla h,\xi\rangle$, and write
\[
T_\alpha=T_{\nu_\alpha},\qquad
T_{\alpha;i}(X,Y)=\langle(\nabla_{e_i}h)(X,Y),\nu_\alpha\rangle.
\]
The derivative of an individual shape operator uses only the tangent connection; explicitly,
\[
\nabla_X A_\xi=T_\xi(X,\cdot,\cdot)+A_{\nabla_X^\perp\xi}.
\]
Here the two-tensor on the right is identified with an endomorphism. The notation $\nabla^\perp\mathcal A$ denotes the connection induced on normal-bundle endomorphisms, so that
\[
(\nabla_X^\perp\mathcal A)\xi
=\nabla_X^\perp(\mathcal A\xi)-\mathcal A(\nabla_X^\perp\xi).
\]
We use $\Delta=\operatorname{div}\nabla$ on functions. On tensor fields and bundle sections, $\Delta$ is the rough Laplacian with the appropriate induced connection:
\[
\Delta U=\sum_i\bigl(\nabla_{e_i}\nabla_{e_i}U
                 -\nabla_{\nabla_{e_i}e_i}U\bigr).
\]
In particular, $\Delta^\perp\mathcal A$ uses the induced normal-bundle connection, whereas $\Delta A_\xi$ uses the tangent tensor connection. Eigenvalues of the scalar Laplacian refer to $-\Delta$. We set $\operatorname{Hess}f=\nabla df$ and, for a symmetric two-tensor $B$ and a one-form $\theta$, use
\[
(\operatorname{div}B)_j=\sum_i\nabla_iB_{ij},\qquad
(B\theta)_i=\sum_jB_{ij}\theta_j.
\]
The latter convention uses the metric to identify one-forms and vectors. Our curvature convention is $R(X,Y)=\nabla_X\nabla_Y-\nabla_Y\nabla_X-\nabla_{[X,Y]}$, and $R^\perp$ uses the same convention. The Gauss, Codazzi and Ricci equations are
\begin{align*}
\langle R(X,Y)Z,W\rangle&=\langle X,W\rangle\langle Y,Z\rangle-\langle X,Z\rangle\langle Y,W\rangle\\
&\quad+\langle h(X,W),h(Y,Z)\rangle-\langle h(X,Z),h(Y,W)\rangle,\\
(\nabla_Xh)(Y,Z)&=(\nabla_Yh)(X,Z),\\
\langle R^\perp(X,Y)\xi,\eta\rangle&=\langle[A_\xi,A_\eta]X,Y\rangle.
\end{align*}
Consequently, each $T_\alpha$ is symmetric in all three tangent indices and trace-free in every pair, and
\[
\operatorname{Ric}(X,Y)=(n-1)g(X,Y)
-\sum_\alpha\langle A^\alpha X,A^\alpha Y\rangle,
\qquad \Scal_M=n(n-1)-S.
\]
Every trace-free symmetric endomorphism $A$ satisfies the elementary bound
\[
\|A\|_{\mathrm{op}}^2\le\frac{n-1}{n}|A|^2
\qquad(\Tr A=0).
\]
Indeed, if $\kappa_i$ are the eigenvalues of $A$, then $\kappa_i^2=(\sum_{j\ne i}\kappa_j)^2\le(n-1)(|A|^2-\kappa_i^2)$ by Cauchy--Schwarz. The tensor and scalar Simons identities for a minimal immersion in the unit sphere are \cite{Simons,Lu11}
\begin{equation}\label{eq:simons-tensor}
(\Delta h)^\alpha=nA^\alpha-\sum_\beta\langle A^\alpha,A^\beta\rangle A^\beta-\sum_\beta[A^\beta,[A^\beta,A^\alpha]],
\end{equation}
and
\begin{equation}\label{eq:simons-scalar}
\frac12\Delta S=|\nabla h|^2+nS-\sum_{\alpha,\beta}\langle A^\alpha,A^\beta\rangle^2-\sum_{\alpha,\beta}|[A^\alpha,A^\beta]|^2.
\end{equation}
In \eqref{eq:simons-tensor}, $(\Delta h)^\alpha=\langle\Delta h,\nu_\alpha\rangle$ includes the normal connection and is not, in general, equal to $\Delta A^\alpha$. All integrals use the Riemannian measure $d\mu_g$, which is suppressed when no ambiguity arises. Unless a different norm or domain is specified, $O(\tau_j)$ and $o(\tau_j)$ for tensor sequences mean uniform pointwise bounds on the domains under discussion: respectively, $\sup|U_j|\le C\tau_j$ and $\sup|U_j|/\tau_j\to0$, for positive $\tau_j$. Estimates in $C^k$ specify the norm and the region on which derivatives are controlled.\par
We first give the reduction of codimension when the fundamental matrix has rank one.
\begin{proposition}\label{prop:rank-one}
Let $n\ge3$. If a connected minimal immersion has $\rank\mathcal A=1$ everywhere, its image lies in a fixed totally geodesic $\Sph^{n+1}$. If the immersion is complete and $S\equiv n$, it is a covering immersion of a standard Clifford hypersurface
\[
C_p=\Sph^p(\sqrt{p/n})\times
   \Sph^{n-p}(\sqrt{(n-p)/n}),\qquad 1\le p\le n-1.
\]
In particular, if $M$ is closed, $q=2$, $\lambda_2\equiv0$, and $Q>0$ is constant, the classical constant-$S$ hypersurface second gap theorem applies. The estimate of Peng--Terng~\cite{PengTerng} suffices for our applications.
\end{proposition}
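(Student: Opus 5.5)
Since $\rank\mathcal A=1$ everywhere, the top eigenvalue $\lambda_1=S>0$ is simple at every point. Hence the unit eigenvector $\nu_1$ is a smooth normal field on $M$, at least locally and up to sign. This sign ambiguity is harmless: every identity below is invariant under $\nu_1\mapsto-\nu_1$. Because $\mathcal A_{\alpha\beta}=\langle A^\alpha,A^\beta\rangle$ is a Gram matrix of rank one, every shape operator $A_\xi$ with $\xi\perp\nu_1$ vanishes. Consequently the normal space $N_1=\operatorname{span}\{h(X,Y)\}$ (the first normal space) is the line field $\R\nu_1$.

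The key step is to show that $\nu_1$ is parallel in the normal bundle. Take $\xi\perp\nu_1$, locally extended as a section of $\nu_1^\perp$. We have $A_\xi\equiv0$, so $\nabla_XA_\xi=0$. The derivative formula from the Preliminaries then gives
\[
0=T_\xi(X,\cdot,\cdot)+A_{\nabla^\perp_X\xi}.
\]
Write $\nabla^\perp_X\xi=\omega(X)\nu_1+(\text{component in }\nu_1^\perp)$. Since $A_\eta=0$ for $\eta\perp\nu_1$, this becomes $A_{\nabla^\perp_X\xi}=\omega(X)A_1$, where $A_1=A_{\nu_1}$. So for every $X$,
\[
T_\xi(X,Y,Z)=-\omega(X)\langle A_1Y,Z\rangle.
\]
By Codazzi, $T_\xi$ is totally symmetric, so $\omega(X)A_1(Y,Z)=\omega(Y)A_1(X,Z)$ for all $X,Y,Z$. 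Suppose $\omega\ne0$ at some point. Choosing $Y\in\ker\omega$ gives $A_1(Y,\cdot)=0$ on $\ker\omega$, so $A_1$ has rank at most one. But $A_1$ is trace-free and $A_1\ne0$ (because $S>0$), so its rank is at least two. This contradiction shows $\omega\equiv0$, which means $\nabla^\perp\nu_1\in\nu_1^\perp$; since $|\nu_1|=1$ also $\langle\nabla^\perp\nu_1,\nu_1\rangle=0$, we conclude $\nabla^\perp\nu_1=0$.

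With this in hand, apply the Weingarten formula: $D_X\nu_1=-dF(A_1X)$. Therefore the subspace
\[
V=\operatorname{span}\{F,\,dF(T_pM),\,\nu_1\}
\]
has dimension $n+2$ and is invariant under differentiation. Indeed, $D_XdF(Y)$ lies in $V$ by the Gauss formula, because $h=S^{1/2}\cdot(\ldots)\nu_1$. Hence $V$ is constant on the connected $M$, and $F(M)\subset V\cap\Sph^{n+q}$, which is a totally geodesic $\Sph^{n+1}$. One should check that this is the point where $n\ge3$ (rank of trace-free $A_1$ at least two) is used; in fact the argument works for $n\ge2$.

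For the second claim, view $M$ as a complete minimal hypersurface of $\Sph^{n+1}$ with $S\equiv n$. Simons' identity \eqref{eq:simons-scalar} in codimension one reads $\tfrac12\Delta S=|\nabla h|^2+S(n-S)$, which here forces $\nabla h\equiv0$. A minimal isoparametric hypersurface with parallel second fundamental form and $S=n$ has at most two constant principal curvatures $\kappa,-\frac{p}{n-p}\kappa$. The condition $S=n$ pins down the standard Clifford values. By Lawson's / Chern--do Carmo--Kobayashi's local classification together with completeness, the immersion is then a covering of $C_p$, obtained by developing along the two parallel eigendistributions (de Rham splitting, then the explicit product immersion). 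Finally, if $q=2$ and $\lambda_2\equiv0$ with $Q=S>0$ constant, then $\rank\mathcal A=1$ everywhere, so $M$ is a closed minimal hypersurface in $\Sph^{n+1}$ with constant $S$, and Peng--Terng applies directly. The main technical point is the normal-parallelism argument, specifically the rank step; the rest is classical.
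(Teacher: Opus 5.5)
Your proposal is correct and follows essentially the paper's proof. The Codazzi relation $\omega(X)A_1(Y,Z)=\omega(Y)A_1(X,Z)$, which is the paper's $A_{ij}\theta_k=A_{ik}\theta_j$, forces the first normal line to be parallel because $A_1$ is nonzero and trace-free. Then $\operatorname{span}\{F,dF(TM),\nu_1\}$ is a fixed $(n+2)$-dimensional subspace, and for $S\equiv n$ the scalar Simons identity gives $\nabla h\equiv0$.

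Two minor remarks:

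\begin{itemize}
\item There is a slip in the parallelism step. The condition $\omega\equiv0$ means $\nabla^\perp\nu_1$ has no component in $\nu_1^\perp$. It does not mean $\nabla^\perp\nu_1\in\nu_1^\perp$, which is only the unit-length condition; as written, your two facts coincide and would not give $\nabla^\perp\nu_1=0$.
\item Where you cite Lawson, Chern--do Carmo--Kobayashi and de Rham for the Clifford identification, the paper argues directly. Parallel eigendistributions make the mixed planes flat, so the Gauss equation gives $\lambda\mu=-1$ for distinct principal curvatures; this yields exactly two of them and is what justifies your unproved ``at most two''. The fixed subspaces $\operatorname{span}\{F_1,E_\lambda\}$ and $\operatorname{span}\{F_2,E_\mu\}$ then exhibit a complete local isometry onto $C_p$, which is therefore a covering.
\end{itemize}
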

\begin{proof}
Write $h=A\nu$ locally, where $A\ne0$ is trace-free and $\nu$ spans the first normal line. If $\eta\perp\nu$ and $\theta(X)=\ip{\nabla_X^\perp\nu}{\eta}$, the Codazzi equation in the $\eta$ direction gives $A_{ij}\theta_k=A_{ik}\theta_j$. If $\theta\ne0$ at a point, choose the first coframe vector in its direction. All columns of $A$ except the first then vanish. Symmetry and the trace-free condition give $A=0$, a contradiction. Thus the first normal line is parallel. The rank-$(n+2)$ subbundle $E=\operatorname{span}\{F,dF(TM),\nu\}$ of the trivial Euclidean ambient bundle is preserved by $D$, by the Gauss and Weingarten formulas. Its orthogonal projection is therefore constant. Since the domain is connected, $E$ is a fixed ambient subspace. This argument does not require the first normal line bundle to be orientable. Assume now that the immersion is complete and $S\equiv n$. Since the fundamental matrix has rank one, the scalar Simons identity reduces to
\[
0=|\nabla h|^2+nS-S^2=|\nabla h|^2.
\]
Thus $h$ is parallel. On the complete simply connected cover choose a unit normal $\eta$. Its shape operator $A$ is parallel and trace-free, with squared norm $n$. The eigendistributions of $A$ are parallel, so a plane spanned by vectors in distinct eigendistributions has zero sectional curvature. The Gauss equation gives $\lambda\mu=-1$ for every pair of distinct eigenvalues. There must be exactly two, with multiplicities $p,m=n-p$ and values $\lambda=-\sqrt{m/p}$, $\mu=\sqrt{p/m}$, after choosing the sign of $\eta$. Let $E_\lambda,E_\mu$ denote the corresponding eigendistributions. Define
\[
F_1=\frac{\eta+\mu F}{\mu-\lambda},\qquad
F_2=\frac{-\eta-\lambda F}{\mu-\lambda}.
\]
Their differentials are the tangent projections onto the two eigenbundles. Moreover, the two maps are pointwise orthogonal, with $|F_1|^2=p/n$, $|F_2|^2=m/n$. The spaces $\operatorname{span}\{F_1,E_\lambda\}$ and $\operatorname{span}\{F_2,E_\mu\}$ are fixed orthogonal Euclidean subspaces: for example,
\[
D_UZ=\nabla_UZ+\ip UZ(\lambda\eta-F),\qquad
\lambda\eta-F=-(1+\lambda^2)F_1
\quad (Z\in E_\lambda).
\]
Thus the lifted immersion is a local isometry onto $C_p$. A local isometry from a complete connected manifold into a connected Riemannian manifold is a surjective covering, as follows by extending geodesic and path lifts. Since the original immersion has the same image and is also a complete local isometry into $C_p$, it too is a surjective covering. The simply connected lifted source is therefore the universal cover of $C_p$. If both factors have dimension at least two, this is the compact product itself. Otherwise it is the full cylinder $\R\times\Sph^{n-1}(\sqrt{(n-1)/n})$.
\end{proof}

We next work locally in codimension two on the open set where the eigenvalues of the fundamental matrix are distinct. Write
\[
a=\lambda_1=2S-Q,\quad b=\lambda_2=Q-S,\quad
\delta=a-b>0,\quad \varepsilon=Q-n=a+2b-n.
\]
These quantities need not be constant in the identities immediately below. Choose an orthonormal normal eigenframe $\nu_1,\nu_2$, with corresponding shape operators $A,B$. Then $|A|^2=a$, $|B|^2=b$, and $\ip AB=0$. Set
\[
\nabla^\perp\nu_1=\omega\nu_2,\qquad
\nabla^\perp\nu_2=-\omega\nu_1,\qquad
T_\alpha=(\nabla h)^\alpha.
\]
In particular,
\begin{equation}\label{nablaAB}
\nabla A=T_1+\omega\otimes B,\qquad
\nabla B=T_2-\omega\otimes A.
\end{equation}
The derivatives on the left use only the tangent tensor connection. The following two formulas keep this distinction explicit:
\begin{align}
\frac12\Delta a&=|T_1|^2+(a-b)|\omega|^2+a(n-a)-|[A,B]|^2,\label{eq:eigenvalue-laplacian-general}\\
\Delta A&=(n-a)A-[B,[B,A]]+2\sum_i\omega_i\nabla_iB+(\operatorname{div}\omega)B+|\omega|^2A.\label{eq:tangent-simons-general}
\end{align}
Here $\operatorname{div}\omega=\sum_i\nabla_i\omega_i$. To verify the first formula, differentiate the diagonal fundamental matrix using its normal-bundle connection. Its off-diagonal derivative is $(a-b)\omega$, and
\[
(\Delta^\perp\mathcal A)_{11}
 =\Delta a-2(a-b)|\omega|^2.
\]
On the other hand, \eqref{eq:simons-tensor} gives
\[
\frac12(\Delta^\perp\mathcal A)_{11}
   =|T_1|^2+na-a^2-|[A,B]|^2.
\]
For \eqref{eq:tangent-simons-general}, calculate at a tangent geodesic frame:
\[
(\Delta h)^1=\sum_i\nabla_iT_{1;i}-\sum_i\omega_iT_{2;i}.
\]
Differentiating \eqref{nablaAB}, substituting
$T_{2;i}=\nabla_iB+\omega_iA$, and then applying
\eqref{eq:simons-tensor} gives the claimed formula. Thus there is no unspecified normal-connection term in \eqref{eq:tangent-simons-general}. Diagonalizing $A$ also gives the commutator estimate
\begin{equation}\label{eq:commutator-bound}
|[A,B]|^2=\sum_{i,j}(A_{ii}-A_{jj})^2B_{ij}^2\le\operatorname{osc}(A)^2|B|^2\le2ab,
\end{equation}
where $\operatorname{osc}(A)$ is the difference between the largest and smallest eigenvalues of $A$. If $Q=n+\varepsilon$ is constant, the ordered eigenvalues $a=2S-Q$ and $b=Q-S$ are smooth, even where they coincide. At any point, choose a unit top normal eigenvector $\nu$ and extend it locally as a unit field with $\nabla^\perp\nu=0$ at that point. The function $a-\langle\mathcal A\nu,\nu\rangle$ has a local minimum equal to zero. Moreover, $\langle\Delta^\perp\nu,\nu\rangle=0$ at the point, so the eigenvector property removes the second derivatives of $\nu$ from the Laplacian of its Rayleigh quotient. Writing $A=A_\nu$ and $B$ for the other shape operator at that point, the tensor Simons identity and \eqref{eq:commutator-bound} yield
\begin{equation}\label{eq:q2-onlyQ-subharmonic}
\begin{split}
\Delta a&\ge2|T_\nu|^2+2na-2a^2-2|[A,B]|^2\\
&\ge-2a\varepsilon.
\end{split}
\end{equation}
This argument does not require distinct normal eigenvalues.

\section{Counterexamples for $n\ge 3$ and $q\ge n+1$}\label{sec:explicit-examples}
We prove Theorem \ref{thm:explicit-sequence} by a direct computation. Fix $n\ge3$ and coprime positive integers $\ell_0,\ell_1$, with $\ell_0$ odd and $\rho=\ell_1^2/\ell_0^2\in(0,1/n)$. Put
\begin{equation}\label{eq:baseline-radii}
r^2=\frac{1-n\rho}{n(1-\rho)},\qquad
s^2=\frac{n-1}{n(1-\rho)}.
\end{equation}
Both radii are positive and their squares sum to one. Define
\begin{equation}\label{eq:baseline}
F(t,x)=(re^{i\ell_0t},se^{i\ell_1t}x),\qquad
(t,x)\in\R/(2\pi\mathbb Z)\times\Sph^{n-1},
\end{equation}
with values in $\Sph^{2n+1}(1)\subset\C\oplus\C^n$. The ambient Euclidean space has dimension $2n+2$, so the immersion lies in $\Sph^{2n+1}$ and has codimension $n+1$. The cross terms of the induced metric vanish, and
\begin{equation}\label{eq:baseline-metric}
g=\frac{\ell_0^2}{n}\,dt^2+s^2g_{\Sph^{n-1}},
\qquad \ell_0^2r^2+\ell_1^2s^2=\frac{\ell_0^2}{n}.
\end{equation}
The metric is positive definite. Since the frequencies are integers, the map is well defined on the closed connected domain. For the two components of the immersion, we have
\[
\Delta(re^{i\ell_0t})=-nre^{i\ell_0t},\qquad
\Delta(se^{i\ell_1t}x)
=-\left(n\rho+\frac{n-1}{s^2}\right)se^{i\ell_1t}x=-nse^{i\ell_1t}x.
\]
By \eqref{eq:gauss}, the immersion is minimal. To prove injectivity, suppose that two image points agree and put $\delta=t-t'$. Then $e^{i\ell_0\delta}=1$ and $e^{i\ell_1\delta}x=x'$. The latter equality implies that $e^{i\ell_1\delta}=\pm1$, and hence $e^{2i\ell_1\delta}=1$. Since $\gcd(\ell_0,2\ell_1)=1$, $\delta\in2\pi\mathbb Z$ and $x=x'$. The immersion is therefore an embedding. The ambient action
\[
(R,z)(w_0,w)=(z^{\ell_0}w_0,z^{\ell_1}Rw),\qquad (R,z)\in SO(n)\times\Sph^1,
\]
is transitive on the image, so all scalar invariants are constant. For linear fullness, suppose that a real linear functional vanishes on the image. Applying it to $F(t,x)$ and $F(t,-x)$, we see that it vanishes on the first complex line and on $\C^n$. Hence it vanishes on the whole ambient space. We now compute the second fundamental form and the fundamental matrix. Write $u=e^{i\ell_0t}$, $v=e^{i\ell_1t}x$, $d=r/s$, and $c=\sqrt n\,\ell_1/\ell_0$. For a local unit frame $Y_j$ on $\Sph^{n-1}$, put
\[
e_0=(i\sqrt n r u,i\sqrt n(\ell_1/\ell_0)s v),\quad e_j=(0,e^{i\ell_1t}Y_j).
\]
These vectors are orthonormal since $(\sqrt n r)^2+(\sqrt n(\ell_1/\ell_0)s)^2=1$. An orthonormal spherical normal frame is
\[
\eta=(su,-rv),\quad \zeta=(-i\sqrt n(\ell_1/\ell_0)s u,i\sqrt n r v),\quad
\xi_j=(0,ie^{i\ell_1t}Y_j),\qquad 1\le j\le n-1.
\]
Choose $Y_j$ to be geodesic at the point under consideration. Direct differentiation and the Gauss formula \eqref{eq:gauss} give
\[
\begin{aligned}
h(e_j,e_k)&=d\delta_{jk}\eta, &h(e_0,e_j)&=c\xi_j, &
h(e_0,e_0)&=-(n-1)d\eta.
\end{aligned}
\]
Indeed, $D_{e_j}e_k=-(0,\delta_{jk}v/s)$ and $D_{e_0}e_0=(-nru,-n\rho sv)$. Adding the spherical correction $g_{ij}F$ gives the above normal components. Their trace is zero, as required by minimality. Let $E_{ij}$ be the matrix unit with $(E_{ij})_{k\ell}=\delta_{ik}\delta_{j\ell}$, where $i,j,k,\ell\in\{0,\cdots,n-1\}$. Consequently
\[
A_\eta=d\diag(-(n-1),1,\cdots,1),\quad
A_{\xi_j}=c(E_{0j}+E_{j0}),\quad A_\zeta=0.
\]
These shape operators are mutually orthogonal. Since $d^2=(1-n\rho)/(n-1)$, the spectrum of $\mathcal{A}$ is
\[
\spec\mathcal A=\{n(1-n\rho),(2n\rho)^{[n-1]},0\}.
\]
Since $2n\rho$ has multiplicity at least two, the second largest eigenvalue is $2n\rho$ throughout the parameter interval, also when $n(1-n\rho)=2n\rho$. Therefore
\[
S=n+n(n-2)\rho,\qquad Q=n+n^2\rho.
\]
The normal bundle is nonflat because $|[A_\eta,A_{\xi_j}]|^2=2n^2c^2d^2>0$. Although one shape operator vanishes, its normal line bundle is not parallel. Indeed,
\[
\nabla^\perp_{e_j}\zeta=\sqrt n(r/s)\xi_j\ne0.
\]

\begin{proof}[Proof of Theorem \ref{thm:explicit-sequence}]
The preceding calculations prove the geometric and spectral assertions in codimension $n+1$. To prove density, fix $Q_*\in(n,2n)$ and put
\[
\tau=\frac{\sqrt{Q_*-n}}n\in(0,1/\sqrt n).
\]
Choose odd primes $p_j\to\infty$ and set $k_j=\lfloor p_j\tau\rfloor$. For all sufficiently large $j$,
\[
1\le k_j<p_j/\sqrt n<p_j,\qquad \gcd(p_j,k_j)=1.
\]
Thus $(\ell_0,\ell_1)=(p_j,k_j)$ is admissible, and
\[
Q_j=n+n^2\left(\frac{k_j}{p_j}\right)^2\rightarrow Q_*.
\]
This proves density in $(n,2n)$ while preserving the fixed domain $\Sph^1\times\Sph^{n-1}$. A totally geodesic inclusion preserves the second fundamental form and adds zero eigenvalues, which proves the assertion for every $q>n+1$.
\end{proof}
In particular, the subfamily $(\ell_0,\ell_1)=(N,1)$, with $N$ odd and $N^2>n$, satisfies
\[Q_N=n+\frac{n^2}{N^2}\downarrow n.\]
For any $\varepsilon>0$, choosing $N^2>\max\{n,n^2/\varepsilon\}$ gives $n<Q_N<n+\varepsilon$.

\section{Preparations for the case $q=2$}\label{sec:q2-gap}
In this section, we collect the compactness and Clifford-cylinder tools used in the proof of Theorem \ref{thm:q2-onlyQ-low-dimension-gap}. We begin with a compactness lemma for complete minimal immersions into a fixed sphere, without assuming that the domains are compact.
 By a pointed immersion $F:(M,g,x)\to\Sph^{n+q}(1)$, we mean an immersion together with a distinguished basepoint $x\in M$.
\begin{lemma}\label{lem:immersion-compactness}
Fix $n\ge2$, $q\ge1$, and $C_0<\infty$. Let $F_j:(M_j,g_j,x_j)\to\Sph^{n+q}(1)$ be complete connected pointed minimal immersions with $|h_j|^2\le C_0$, where $g_j$ is the induced metric. After passing to a subsequence, the immersions converge smoothly on an exhaustion to a complete connected minimal immersion $F_\infty:(M_\infty,g_\infty,x_\infty)\to\Sph^{n+q}(1)$. More precisely, there are embeddings $\phi_j:W_j\to M_j$ of relatively compact open sets exhausting $M_\infty$, with $\phi_j(x_\infty)=x_j$, and
\[
\phi_j^*g_j\longrightarrow g_\infty,
\qquad F_j\circ\phi_j\longrightarrow F_\infty
\quad\hbox{in }C^\infty_{\mathrm{loc}}.
\]
For each fixed $R$, the images of these embeddings contain $B_{g_j}(x_j,R)$ for all sufficiently large $j$. In local coordinates obtained by projection onto the tangent spaces, the immersions have uniform derivative estimates of every order, and
\begin{equation}\label{eq:intrinsic-injectivity-bound}
\operatorname{inj}(M_j,g_j)\ge(C_0+n)^{-1/2}.
\end{equation}
If $M_\infty$ is compact, the convergence is eventually global through diffeomorphisms. If the intrinsic diameters of the $M_j$ are uniformly bounded, the limit is compact and the same conclusion holds.
\end{lemma}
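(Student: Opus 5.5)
We will follow the standard graphical-compactness argument for immersions with bounded second fundamental form, together with Cheeger--Gromov convergence. Throughout, we work with the Euclidean immersions $F_j:M_j\to\Sph^{n+q}\subset\R^{n+q+1}$. By \eqref{eq:gauss} their Euclidean second fundamental form satisfies $|\mathrm{II}^E_j|^2=S_j+n\le C_0+n$.

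\emph{Step 1: uniform graphical radius.} Set $r_0=c\,(C_0+n)^{-1/2}$ with a dimensional constant $c$. For every $p\in M_j$, the connected component of $F_j^{-1}$ of the Euclidean ball of radius $r_0$ about $F_j(p)$ containing $p$ is the graph of a map $u$ over a domain of $T_pM_j$. This map satisfies $u(0)=0$, $du(0)=0$, and $|du|\le 1$, and the component contains the intrinsic ball $B_{g_j}(p,r_0/2)$. This is the usual ODE argument along geodesics: the Gauss map is Lipschitz with constant $\sqrt{C_0+n}$, so the tangent planes tilt by less than a fixed angle. The graph function then solves the minimal surface system in $\Sph^{n+q}$. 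This is a uniformly elliptic quasilinear system with $C^{1,1}$ bounds, because $|D^2u|\lesssim|\mathrm{II}^E|$. Schauder bootstrapping gives uniform $C^k$ bounds on a smaller ball. These are the uniform derivative estimates in the projection coordinates.

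\emph{Step 2: injectivity radius and curvature bounds.} By the Gauss equation, the sectional curvatures satisfy $|K|\le 1+2C_0$ uniformly. Covariant derivatives of curvature are bounded through Step 1, or equivalently through the Simons identity \eqref{eq:simons-tensor} and elliptic estimates. For \eqref{eq:intrinsic-injectivity-bound}, two facts suffice. First, $K\le 1+ C_0\le C_0+n$ (using $\langle h(X,X),h(Y,Y)\rangle-|h(X,Y)|^2\le |A|$-type bounds), so the conjugate radius is at least $\pi(C_0+n)^{-1/2}$. Second, by Klingenberg's lemma, a short injectivity radius would produce a closed geodesic loop of length $2\,\mathrm{inj}$. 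We rule out short loops as follows. A unit-speed geodesic $\gamma$ in $M$ has Euclidean acceleration of size at most $\sqrt{C_0+n}$, because $D_{\dot\gamma}\dot\gamma=\mathrm{II}^E(\dot\gamma,\dot\gamma)$. A closed curve in Euclidean space with curvature at most $k$ has length at least $2\pi/k$ by Fenchel's theorem. Hence any geodesic loop has length at least $2\pi(C_0+n)^{-1/2}$, which gives \eqref{eq:intrinsic-injectivity-bound}. Checking the constants carefully (in particular whether the loop is smooth at its base point, where Fenchel's theorem must be replaced by its version for piecewise curves with one corner) is the main technical point. If needed, we would instead use the Step 1 graph radius directly: the intrinsic ball $B(p,r_0/2)$ embeds as a graph, so it contains no short loops.

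\emph{Step 3: extraction and limit.} With uniform bounds on $|\nabla^k\mathrm{Rm}|$ and a uniform lower bound on the injectivity radius, the Cheeger--Gromov compactness theorem gives pointed smooth convergence $\phi_j^*g_j\to g_\infty$ on an exhaustion $W_j$, with $\phi_j(x_\infty)=x_j$ and images containing $B_{g_j}(x_j,R)$ for $j$ large. The maps $F_j\circ\phi_j$ take values in a compact sphere and have uniform $C^k$ bounds in harmonic coordinates by Step 1. Arzel\`a--Ascoli and a diagonal argument then give $C^\infty_{\mathrm{loc}}$ convergence to a limit $F_\infty$. This limit is isometric for $g_\infty$ and hence an immersion, and it is minimal because \eqref{eq:gauss} passes to the limit. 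The limit $(M_\infty,g_\infty)$ is complete by construction. It is connected as an exhaustion by the balls $B(x_\infty,R)$.

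\emph{Step 4: compact cases.} If $M_\infty$ is compact, take $R>\operatorname{diam}M_\infty+1$. Then $\phi_j$ is defined on all of $M_\infty$ and is an open embedding with closed image. The image is closed because its boundary would lie at distance at least $R-o(1)$ from $x_j$, but the image already has diameter less than $R$. Since $M_j$ is connected, $\phi_j$ is a diffeomorphism for large $j$. If instead $\operatorname{diam}M_j\le D$ uniformly, then $B_{g_j}(x_j,D+1)=M_j$ lies in the image of $\phi_j$, which forces $M_\infty$ to be bounded, hence compact by completeness. The previous argument then applies.
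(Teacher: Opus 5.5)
Your proposal is correct and follows essentially the paper's route. Both arguments use uniform graph charts from $|\mathrm{II}^E_j|^2\le C_0+n$ with Schauder bootstrapping, the conjugate-radius bound plus exclusion of short geodesic loops through the Euclidean acceleration bound, then Cheeger--Gromov extraction and the compact cases. The one local difference is that you exclude short loops with Fenchel's theorem, where the paper uses the elementary estimate $0=\langle z(L)-z(0),z'(0)\rangle\ge L-\Lambda L^2/2$ with $\Lambda=\sqrt{C_0+n}$.

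For the loop with a corner at its base point, the piecewise Fenchel theorem gives only $L\ge\pi/\Lambda$, not the $2\pi/\Lambda$ you state. This still yields $\operatorname{inj}\ge\frac{\pi}{2}(C_0+n)^{-1/2}$, which is enough for the stated bound. Your graph-chart fallback is therefore unnecessary, and it would not give the explicit constant anyway.
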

\begin{proof}
Put $\Lambda=\sqrt{C_0+n}$. For the immersion into Euclidean space, we have $|\mathrm{II}^E_j|\le\Lambda$. The Euclidean Gauss equation implies $\sec_{g_j}\le\Lambda^2$, so the conjugate radius is at least $\pi/\Lambda$. If $\operatorname{inj}_{g_j}(x_0)<\pi/\Lambda$ at a point $x_0\in M_j$, the nearest-cut-point alternative gives a unit-speed geodesic loop $\gamma:[0,L]\to M_j$, smooth away from its basepoint, with $L=2\operatorname{inj}_{g_j}(x_0)$; see \cite[Chapter~5]{Petersen}. For $z=F_j\circ\gamma$ in Euclidean space, $|z'|=1$ and $|z''|\le\Lambda$ on $(0,L)$. Consequently
\[
0=\ip{z(L)-z(0)}{z'(0)}
=\int_0^L\ip{z'(t)}{z'(0)}\,dt
\ge L-\frac{\Lambda L^2}{2}.
\]
Thus $L\ge2/\Lambda$. Together with the conjugate-radius alternative, this proves \eqref{eq:intrinsic-injectivity-bound} at every point. This argument uses neither compactness nor embeddedness. We next construct graph coordinates of uniform size. Fix $x_0\in M_j$ and identify $T=dF_j(T_{x_0}M_j)$ with $\R^n$. Let $\pi:\R^{n+q+1}\to T$ be orthogonal projection and set $P=\pi(F_j-F_j(x_0))$. Along a minimizing geodesic of length at most $R_0=(4\Lambda)^{-1}$, a parallel unit tangent vector, viewed in Euclidean space, changes by at most $\Lambda R_0$. It follows that $dP$ is invertible on $B_{g_j}(x_0,R_0)$ and $\|(dP)^{-1}\|_{\mathrm{op}}\le2$. Put $\rho=R_0/4$ and $D_\rho=\{x\in T:|x|<\rho\}$. For $x\in D_\rho$, solve
\[
\dot\gamma_x(t)=(dP_{\gamma_x(t)})^{-1}x,
\qquad \gamma_x(0)=x_0,\qquad 0\le t\le1.
\]
Its length is at most $2|x|<R_0/2$. By completeness, the local solution extends to $t=1$. Smooth dependence on the parameter gives $\Psi(x)=\gamma_x(1)$, with $P\circ\Psi=x$. Thus $\Psi$ is a smooth embedding, and $F_j\circ\Psi$ is a graph with uniformly bounded slope. Its image in $M_j$ contains $B_{g_j}(x_0,\rho)$: project a minimizing path of length less than $\rho$ and use uniqueness of its lift through the local inverse of $P$. Thus each graph chart contains an intrinsic neighborhood, even if other sheets of the immersion meet its image. After a rotation about the sphere center, write this graph as $F_j(x)=(x,c_j+u_j(x))$, with $u_j(0)=Du_j(0)=0$ and $|c_j|=1$. Here $c_j,u_j$ take values in $\R^{q+1}$, and $\alpha=1,\ldots,q+1$ in the graph equation below labels Euclidean vertical coordinates. If $g_{ik}=\delta_{ik}+\partial_i u_j\cdot\partial_k u_j$, the Euclidean graph formula gives
\[
|D^2u_j|^2\le(1+|Du_j|^2)^3|\mathrm{II}^E_j|^2.
\]
For the graph frames $T_i=(e_i,\partial_i u_j)$ and $N_\alpha=(-Du_j^Te_\alpha,e_\alpha)$, we have
\[
\ip{\mathrm{II}^E_j(T_i,T_k)}{N_\alpha}=\partial_{ik}u_j^\alpha.
\]
Let
\[
g=(\langle T_i,T_k\rangle)_{i,k}=I_n+(Du_j)^TDu_j,
\qquad
G=(\langle N_\alpha,N_\beta\rangle)_{\alpha,\beta}
=I_{q+1}+Du_j(Du_j)^T.
\]
Writing $(g^{ik})=g^{-1}$ and $(G^{\alpha\beta})=G^{-1}$, we obtain
\[
|\mathrm{II}^E_j|^2
=\sum_{i,k,r,s=1}^n\sum_{\alpha,\beta=1}^{q+1}
g^{ir}g^{ks}G^{\alpha\beta}
(\partial_{ik}u_j^\alpha)(\partial_{rs}u_j^\beta).
\]
Both $g^{-1}$ and $G^{-1}$ are bounded below by $(1+|Du_j|^2)^{-1}I$ as quadratic forms. Since $g^{-1}$ occurs twice and $G^{-1}$ once in this contraction, it follows that
\[
|\mathrm{II}^E_j|^2\ge(1+|Du_j|^2)^{-3}|D^2u_j|^2,
\]
which proves the preceding graph estimate. Subtracting $Du_j$ times the horizontal components of $\Delta_gF_j=-nF_j$ from its vertical components yields
\[
g^{ik}(Du_j)\partial_{ik}u_j^\alpha
=-n\big(c_j^\alpha+u_j^\alpha-x^k\partial_k u_j^\alpha\big).
\]
The bounds of the first-order derivative and the second-order derivative give uniform ellipticity and uniform H\"older bounds for the coefficients and the right-hand side. Interior Schauder estimates, applied successively after differentiation, give bounds of every order on a smaller graph ball of fixed radius. They also bound every covariant derivative of $h_j$ and the coordinate derivatives of $g_j$. The transition maps are obtained by tangent projection of the Euclidean graphs. They therefore satisfy the same derivative estimates on the overlaps. The metric estimates give smooth pointed convergence to a complete limit by \cite[Chapter 11]{Petersen}. The local coordinate construction and a diagonal subsequence give embeddings on an exhaustion, with convergence in every derivative order. In these coordinates the components of $F_j\circ\phi_j$ are bounded by one and solve
\[
\Delta_{\phi_j^*g_j}(F_j\circ\phi_j)+n(F_j\circ\phi_j)=0.
\]
Interior elliptic estimates and a further diagonal subsequence give smooth convergence to $F_\infty$. Passing to the limit in $(F_j\circ\phi_j)^*g_{\Sph}=\phi_j^*g_j$ proves $F_\infty^*g_{\Sph}=g_\infty$, so the limit is an isometric immersion. The limiting equation then proves its minimality. We next verify the assertion about intrinsic balls. For fixed $R>0$, choose a precompact open $\Omega\subset M_\infty$ containing $\overline B_{g_\infty}(x_\infty,3R)$, with its closure inside an exhaustion region. For large $j$, $\phi_j^*g_j\ge g_\infty/4$ on $\overline\Omega$. A path from $x_j$ first exiting $\phi_j(\Omega)$ has length at least $3R/2$, by lifting it up to its first exit. Every minimizing path of length less than $R$ therefore stays in $\phi_j(\Omega)$, proving the claimed containment. If $M_\infty$ is compact, the exhaustion eventually contains the whole limit. Its image under $\phi_j$ is both compact and open in the connected manifold $M_j$, and hence equals $M_j$. These maps are global diffeomorphisms and the metric convergence bounds the intrinsic diameters. Conversely, if $\operatorname{diam}(M_j)\le D$, choose $R>D$. The containment of the intrinsic balls implies that $\phi_j|_\Omega$ is surjective onto $M_j$, and hence a diffeomorphism. By completeness and the diameter bound, $M_j$ is compact. Thus $\Omega$ is compact and open in connected $M_\infty$, so $\Omega=M_\infty$.
\end{proof}
Let $m=n-1\ge2$, $s^2=m/n$, and $g_s=s^2g_{\Sph^m}$. Consider the Clifford cylinder
\[
F_0(t,x)=\left(n^{-1/2}e^{i\sqrt n t},sx\right),\qquad
(t,x)\in\R\times\Sph^m,
\qquad g_0=dt^2+g_s.
\]
We identify $\Sph^m(s)$ with $\Sph^m(1)$ equipped with $g_s$ and denote its volume form by $d\mu_s$. Thus $x$ is always a unit vector, while sphere gradients, Hessians, and Laplacians use $g_s$. Choose the normal so that $A=\diag(-\sqrt m,1/\sqrt m,\ldots,1/\sqrt m)$ and $|A|^2=n$. Consider an infinitesimal variation $f\zeta$ in a fixed extra ambient normal direction $\zeta$. The first variation of the shape operator is $B_0=\operatorname{Hess}f+fg_0$, and the linearized minimality equation is $(\Delta_{g_0}+n)f=0$. A rotation of the normal frame removes the component of $B_0$ parallel to $A$. We thus obtain
\begin{equation}\label{B}
B=B_0-\frac{\ip{B_0}{A}}n A.
\end{equation}
These formulas follow by differentiating the variation $\cos(\tau f)F_0+\sin(\tau f)\zeta$ at $\tau=0$. The first variation of the metric vanishes because the shape operator in the $\zeta$ direction is zero before the variation.

\begin{lemma}\label{thm:jacobi}
Let $f$ be a smooth scalar solution of $(\Delta_{g_0}+n)f=0$ on the entire cylinder. If
\[
\sup_{t\in\R}\int_{\Sph^m(s)}|B(t,x)|^2\,d\mu_s(x)<\infty,
\]
then for constants $\alpha,\beta\in\R$ and $v,w\in\R^{m+1}$,
\begin{equation}\label{eq:jacobi-classification}
f(t,x)=\alpha\cos(\sqrt n t)+\beta\sin(\sqrt n t)+(v+tw)\cdot x.
\end{equation}
The projected tensor satisfies
\begin{equation}\label{eq:jacobi-norm}
|B|^2=\frac2{s^2}\left(|w|^2-(w\cdot x)^2\right).
\end{equation}
\end{lemma}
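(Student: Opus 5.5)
Expand $f$ in spherical harmonics on $\Sph^m(s)$: $f(t,x)=\sum_k f_k(t,x)$, where $f_k(\cdot,x)$ lies in the degree-$k$ eigenspace with $-\Delta_{g_s}$-eigenvalue $\mu_k=k(k+m-1)/s^2=nk(k+m-1)/m$. The equation $(\Delta_{g_0}+n)f=0$ becomes $\partial_t^2 f_k=(\mu_k-n)f_k$ componentwise. For $k=0$ we have $\mu_0-n=-n$, so $f_0$ is a combination of $\cos(\sqrt n t),\sin(\sqrt n t)$. For $k=1$ we get $\mu_1=n$, so $f_1$ is affine in $t$, giving $(v+tw)\cdot x$. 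For $k\ge2$, $\mu_k-n>0$, so each coefficient is $c_1e^{\kappa_k t}+c_2e^{-\kappa_k t}$ with $\kappa_k=\sqrt{\mu_k-n}>0$. The proof therefore reduces to showing that the $L^2$ bound kills all modes with $k\ge2$, and then computing $|B|^2$ for the surviving modes.

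For the vanishing, compute $B_0=\operatorname{Hess}f+fg_0$ and project off $A$. The cleanest component to use is the trace-free part of the spherical block. On $\Sph^m(s)$, for a degree-$k$ harmonic $Y$, the trace-free Hessian $\operatorname{Hess}_{g_s}Y-\frac{\Delta_{g_s}Y}{m}g_s$ has $L^2$ norm squared $c_k\|Y\|^2$, with $c_k>0$ exactly when $k\ge2$ (Bochner/Obata computation). The trace-free spherical block is orthogonal to $A$, since $A$ restricted to that block is a multiple of $g_s$, so it survives the projection in \eqref{B}. Summing over orthogonal harmonic components gives
\[
\int_{\Sph^m}|B|^2\,d\mu_s\ \ge\ \sum_{k\ge2}c_k\|f_k(t)\|^2_{L^2}.
\]
This forces each $\|f_k(t)\|$ to be bounded in $t\in\R$, hence $c_1=c_2=0$, so $f_k\equiv0$ for $k\ge2$. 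Orthogonality of the eigenspaces under the integral handles the mixing, and smoothness of $f$ justifies the termwise expansion.

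For \eqref{eq:jacobi-norm}, compute directly. The $\cos,\sin$ modes give $B_0=-n f\,dt^2+f\,dt^2+f g_s$, which is pointwise diagonal of the form $\diag(-m f,f,\dots,f)=\sqrt m f\,A$. It is therefore proportional to $A$ and is removed by the projection. For $v\cdot x$, the restriction of a linear function to the round sphere of radius $s$ satisfies $\operatorname{Hess}_{g_s}(v\cdot x)=-(v\cdot x)g_s/s^2$. The $\partial_t$ second derivative vanishes, so $B_0=(v\cdot x)\bigl(dt^2+(1-1/s^2)g_s\bigr)$. Since $1-1/s^2=-1/m$, this again equals a multiple of $A$ and is projected away. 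For $t\,w\cdot x$ the same holds for the undifferentiated part, leaving only the mixed terms $dt\otimes d(w\cdot x)+d(w\cdot x)\otimes dt$, which are orthogonal to $A$. Their squared norm is $2|\nabla_{g_s}(w\cdot x)|^2=2(|w|^2-(w\cdot x)^2)/s^2$, as claimed.

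The main obstacle is the orthogonality bookkeeping in the $k\ge2$ step. One must check that the lower bound on $\int|B|^2$ survives the projection onto $A^\perp$, and the restriction to the trace-free spherical block handles this. One must also verify that the constant $c_k=\mu_k(\mu_k s^2-(m-1))\cdot\frac{m-1}{m}\cdot s^{-2}$ (or a similar expression) is indeed positive for $k\ge2$. If that bound proves awkward, I would instead use the mixed $dt\,d_xf$ block, whose norm is $\sum_k\mu_k|\partial_tf_k|^2$, to force $\partial_tf_k$ bounded, which likewise excludes exponential growth.
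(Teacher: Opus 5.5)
Your proposal is correct and is essentially the paper's proof. You expand in spherical harmonics, solve $f_k''=(\mu_k-n)f_k$, use the integrated Bochner identity for the trace-free spherical Hessian (which the projection off $A$ leaves unchanged) to exclude the exponential modes $k\ge2$, and compute the surviving mixed block $dt\otimes d(w\cdot x)+d(w\cdot x)\otimes dt$. The paper merely organizes this by first showing $B_{00}=0$, $B_{0a}=\nabla_af_t$, $B_{ab}=(\operatorname{Hess}^0f)_{ab}$ for every Jacobi field.

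One small correction: the Bochner constant is $c_k=\frac{m-1}{m}\mu_k(\mu_k-n)$; your formula has $m-1$ where $m$ belongs. It is this corrected form that vanishes exactly for $k=0,1$, as you assert.
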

\begin{proof}
Use $e_0=\partial_t$ and a local orthonormal frame $e_1,\cdots,e_m$ for $g_s$. The indices $a,b$ range from $1$ to $m$ and refer to sphere directions. All sphere derivatives in this proof use $g_s$. Writing $B_0=\operatorname{Hess}_{g_0}f+fg_0$, we have
\[
(B_0)_{00}=f_{tt}+f,\qquad
(B_0)_{0a}=\nabla_a f_t,\qquad
(B_0)_{ab}=(\operatorname{Hess}_{g_s}f)_{ab}+f\delta_{ab}.
\]
Taking the inner product with $A$ and using the Jacobi equation, we obtain
\[
\begin{aligned}
\langle B_0,A\rangle
&=-\sqrt m(f_{tt}+f)
+\frac1{\sqrt m}\bigl(\Delta_{g_s}f+mf\bigr)\\
&=-\frac n{\sqrt m}(f_{tt}+f).
\end{aligned}
\]
Substituting into \eqref{B}, we have
\[
B_{00}=0,\qquad B_{0a}=\nabla_a f_t,\qquad
B_{ab}=(\operatorname{Hess}^0 f)_{ab}.
\]
Here, for any smooth function $Y$ on $\Sph^m(s)$, the trace-free sphere Hessian is defined by
\[
\operatorname{Hess}^0Y
\coloneqq\operatorname{Hess}_{g_s}Y-\frac{\Delta_{g_s}Y}{m}\,g_s,
\qquad m=n-1.
\]
For $f=f(t,x)$ this definition is applied to $f(t,\cdot)$ at each fixed $t$. Expanding in spherical harmonics orthonormal in $L^2(\Sph^m(s),d\mu_s)$, we see that each coefficient $f_d(t)$ belonging to degree $d$, whose eigenvalue is $\kappa_d=d(d+m-1)/s^2$, satisfies $f_d''+(n-\kappa_d)f_d=0$. For a spherical eigenfunction $Y$ satisfying $-\Delta_{g_s}Y=\kappa Y$, integration of the Bochner formula gives
\[
\int_{\Sph^m(s)}|\operatorname{Hess}^0Y|_{g_s}^2\,d\mu_s
=\frac{m-1}{m}\kappa(\kappa-n)
\int_{\Sph^m(s)}|Y|^2\,d\mu_s.
\]
This follows from $\operatorname{Ric}_{g_s}=(m-1)s^{-2}g_s$ and subtracting $m^{-1}\int_{\Sph^m(s)}(\Delta_{g_s}Y)^2\,d\mu_s$ from the integrated squared Hessian norm. By polarization, the corresponding Hessian tensors are orthogonal whenever the spherical harmonics are orthogonal. For $d\ge2$, we have $\kappa_d>n$. The integral bound for $B$ bounds the trace-free Hessian in the sphere directions and hence each harmonic coefficient on $\R$. The solutions of the corresponding ordinary differential equation are exponential, so boundedness on the whole line implies that they vanish. For $d=0$, we have $\kappa_0=0$ and $f_0''+nf_0=0$, which gives the sine and cosine terms in \eqref{eq:jacobi-classification}. For $d=1$, we have $\kappa_1=n$, so the coefficients are affine functions of $t$. This proves the expansion. The only possibly nonzero components of $B$ are the mixed components, which are given by $\nabla(w\cdot x)$. The squared norm of this gradient is $s^{-2}(|w|^2-(w\cdot x)^2)$, which proves \eqref{eq:jacobi-norm}.
\end{proof}

We recall the normalized equations without imposing constancy of the normal eigenvalues. On a region where $a>b$, let $A,B$ be the normal eigenframe shape operators and let $\omega$ be their normal connection form. For any positive constant $\sigma$, put $C=B/\sigma$ and $\theta=\omega/\sigma$.
Minimality, orthogonality, and the Codazzi and Ricci equations give
\begin{equation}\label{eq:normalized-constraints}
\Tr C=0,\qquad\ip AC=0,
\end{equation}
and
\begin{align}
(\nabla_XC)Y-(\nabla_YC)X
+\theta(X)AY-\theta(Y)AX&=0,\label{eq:normalized-codazzi}\\
d\theta(X,Y)&=\ip{[A,C]X}{Y}.\label{eq:normalized-ricci}
\end{align}
Tracing the Codazzi equation yields
\begin{equation}\label{eq:normalized-div}
\operatorname{div}C=-A\theta.
\end{equation}
No constancy or positive lower bound for $|C|$ is required. The eigenframe coefficients are also controlled under smooth immersion convergence whenever $a-b$ stays bounded away from zero. Indeed, extend $\mathcal A$ by zero to the ambient Euclidean bundle and let $N$ be the orthogonal projection onto the spherical normal bundle. The normal eigenline projectors are
\[
P_1=\frac{\mathcal A-bN}{a-b},\qquad P_2=N-P_1.
\]
For a local unit section $\nu$ of a rank-one projector $P$, differentiation of $P\nu=\nu$ and $|\nu|=1$ gives $D\nu=(DP)\nu$. Thus all local derivatives of the eigenframe shape operators are controlled by the immersion and the normal spectral gap.

\begin{lemma}\label{lem:local-elliptic}
Let $n\ge3$. On a fixed coordinate domain $U$, suppose that $g_j\to g$ and $A_j\to A$ smoothly, where each $A_j$ is symmetric and trace-free with respect to $g_j$, and $A$ is invertible. For the $j$-th system, all covariant derivatives, contractions, and tensor norms are taken with respect to $g_j$. Let symmetric tensors $C_j$ and one-forms $\theta_j$ satisfy
\[
\Tr_{g_j}C_j=\ip{A_j}{C_j}=0,
\]
\[
(\nabla_XC_j)Y-(\nabla_YC_j)X
   +\theta_j(X)A_jY-\theta_j(Y)A_jX=0.
\]
If $\sup_U|C_j|$ is bounded independently of $j$, then all local derivatives of $C_j$ and $\theta_j$ are uniformly bounded. More precisely, for $U'\Subset U$, $k\ge0$, and all sufficiently large $j$,
\[
\|C_j\|_{C^{k+1}(U')}+\|\theta_j\|_{C^k(U')}
   \le C_k\|C_j\|_{L^2(U)}.
\]
The constants depend on the regions, uniform metric and coefficient bounds, and a uniform bound for $A_j^{-1}$, but not on a positive lower bound for $|C_j|$.
\end{lemma}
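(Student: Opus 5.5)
The plan is to treat $(C_j,\theta_j)$ as a solution of a \emph{linear} overdetermined first-order system whose coefficients converge smoothly. Step one eliminates $\theta_j$ algebraically. Step two checks that the resulting operator on $C_j$ has injective principal symbol; this is where $n\ge3$, the trace-free condition, the condition $\ip{A_j}{C_j}=0$, and the invertibility of $A$ enter. Step three applies standard interior estimates for injectively elliptic systems, uniformly in $j$. Linearity is why no lower bound on $|C_j|$ is needed.

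\emph{Algebraic elimination of $\theta$.} Consider the bundle map $\Phi_{A}:\theta\mapsto\theta\wedge A$, given by $(X,Y)\mapsto\theta(X)AY-\theta(Y)AX$. It sends one-forms to vector-valued two-forms, and I claim it is injective when $A$ is invertible and $n\ge2$. Suppose $\theta\ne0$ lies in its kernel. Pick $X$ with $\theta(X)\ne0$ and a nonzero $Y\in\ker\theta$. Then $\theta(X)AY=0$, so $AY=0$, which contradicts invertibility. The map $\Phi_{A_j}$ is injective with a uniform lower bound, because $A_j\to A$ and $A_j^{-1}$ is uniformly bounded. Let $L_j=(\Phi_{A_j}^*\Phi_{A_j})^{-1}\Phi_{A_j}^*$ be its smooth left inverse, and let $\Pi_j=I-\Phi_{A_j}L_j$ be the projection onto the orthogonal complement of its image. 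Write the Codazzi equation as $d^\nabla C_j+\theta_j\wedge A_j=0$. It then gives
\[
\theta_j=-L_j(d^\nabla C_j),\qquad \Pi_j(d^\nabla C_j)=0 .
\]
So it suffices to estimate $C_j$ in $C^{k+1}$; the bound on $\theta_j$ in $C^k$ follows with uniform constants. For the constraints, let $E_j=\{c\ \text{symmetric}:\Tr_{g_j}c=0,\ \ip{A_j}{c}=0\}$. This is a smooth subbundle with a projector $p_j$ converging smoothly, and $C_j$ is a section of $E_j$. I then consider the first-order operator
\[
P_jC=\Pi_j(d^\nabla C)\oplus\bigl((1-p_j)\nabla C\bigr)
\]
acting on sections of $E_j$. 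The second component records the constraints after differentiation; it equals a zeroth-order term in $C$, because $\nabla p_j$ is bounded.

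\emph{Symbol injectivity.} At the limit, fix $\xi\ne0$ and $c\in E$ with $\xi\wedge c\in\operatorname{im}\Phi_A$. Then $\xi\wedge c=\varphi\wedge A$ for some one-form $\varphi$. Restrict to $X,Y\in\xi^\perp$. This space has dimension $n-1\ge2$, and on it $\varphi(X)AY=\varphi(Y)AX$. If $\varphi|_{\xi^\perp}\ne0$, pick $Y\in\xi^\perp\cap\ker\varphi$, which is nonzero since $n-1\ge2$. The same argument as in the elimination step forces $AY=0$, which is impossible. Hence $\varphi=\lambda\xi$, and the relation becomes $\xi\wedge(c-\lambda A)=0$. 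This forces $c-\lambda A=\mu\,\xi\otimes\xi$. Taking the trace and using $\Tr c=\Tr A=0$ gives $\mu=0$. Pairing with $A$ and using $\ip Ac=0$ gives $\lambda|A|^2=0$. Therefore $c=0$. This is the only place where $n\ge3$ is used. By continuity the symbol of $P_j$ is injective on $U$ with a uniform lower bound for large $j$.

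\emph{Uniform estimates.} The operator $P_j^*P_j$ is a second-order elliptic system on $E_j$ with smoothly converging coefficients, and it annihilates $C_j$ up to zeroth-order terms. Interior $L^2$ (Gårding) estimates and difference quotients, iterated on nested domains between $U'$ and $U$, give $\|C_j\|_{H^{k}(U'')}\le C\|C_j\|_{L^2(U)}$ for every $k$. Sobolev embedding then gives the $C^{k+1}(U')$ bound, and the formula for $\theta_j$ gives the $C^k$ bound. The constants depend only on the coefficient bounds and on the uniform symbol lower bound. The assumed bound on $\sup_U|C_j|$ only guarantees that the $L^2$ norm on the right is finite and uniform.

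I expect the main obstacle to be the bookkeeping in this last step. One must verify that the constraint subbundle, the projections $\Pi_j$ and $L_j$, and the ellipticity constant all converge uniformly. The symbol computation is straightforward once the $\xi^\perp$ restriction trick is used.
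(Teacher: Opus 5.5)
Your proposal is correct and follows essentially the same route as the paper: you build an overdetermined first-order operator on the constraint bundle $E_A$ and prove its principal symbol injective by the same computation. Your step $\varphi|_{\xi^\perp}=0$ is the paper's ``$\theta_i=0$ for $i\ge2$'', and your $c-\lambda A=\mu\,\xi\otimes\xi$ is its ``$C+\theta_1A$ is supported only in entry $11$''. You then apply uniform interior estimates to $P^*P$ and Sobolev embedding, as the paper does.

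The only variation is how $\theta_j$ is eliminated. The paper uses the traced Codazzi identity $\theta_j=-A_j^{-1}\operatorname{div}C_j$. You instead use the algebraic left inverse of $\theta\mapsto\theta\wedge A_j$ and project the Codazzi equation onto the complement of its image. This changes nothing essential in the symbol analysis or the estimates.
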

\begin{proof}
Tracing the first-order equation gives $\operatorname{div}C_j=-A_j\theta_j$. Eliminating $\theta_j$, consider the operator on
\[
E_A=\{C\in\operatorname{Sym}^2T^*U:\Tr C=\ip AC=0\}
\]
given by
\[
\begin{aligned}
(P_AC)_{ikl}={}&\nabla_iC_{kl}-\nabla_kC_{il}
-(A^{-1}\operatorname{div}C)_iA_{kl}+(A^{-1}\operatorname{div}C)_kA_{il}.
\end{aligned}
\]
Its principal symbol is injective for every nonzero covector when $n\ge3$ and $A$ is invertible. After rescaling the covector, choose an orthonormal coframe in which it is $e^1$, without also diagonalizing $A$. A tensor in the symbol kernel and the corresponding one-form $\theta$ satisfy
\[
\xi_iC_{kl}-\xi_kC_{il}+\theta_iA_{kl}-\theta_kA_{il}=0.
\]
For $i,k\ge2$, a nonzero $\theta_i$ would make the $n-1\ge2$ rows of $A$ with indices at least two proportional, contrary to invertibility. Thus $\theta_i=0$ for $i\ge2$. The equations with $i=1$, $k\ge2$ imply that $C+\theta_1A$ is supported only in entry $11$. Its trace vanishes, so $C=-\theta_1A$; the constraint $\ip AC=0$ then gives $\theta_1=0$ and $C=0$.\par
Let $P_A^*$ denote the formal $L^2$ adjoint for the induced bundle metrics and the Riemannian volume measure. The injectivity of the principal symbol of $P_A$ implies that $P_A^*P_A$ is strongly elliptic on $E_A$. On relatively compact subdomains, choose smooth local frames of $E_{A_j}$ converging to frames of $E_A$. The coefficient bounds and the injective symbol give uniform strong ellipticity. Since $P_{A_j}C_j=0$, interior estimates for elliptic systems \cite{ADN} bound every local Sobolev norm of $C_j$ by $\|C_j\|_{L^2(U)}$. Sobolev embedding gives the asserted bounds for $C_j$, and $\theta_j=-A_j^{-1}\operatorname{div}C_j$ gives those for $\theta_j$.
\end{proof}

The normalized equations admit the following scalar potential formulation.
\begin{lemma}\label{lem:q2-global-potential}
Let $F:M^n\to C_p\subset\Sph^{n+1}$, $n\ge3$, be a connected covering of a standard Clifford hypersurface, with global unit normal $\eta$ and shape operator $A$. Suppose that a symmetric tensor $C$ and a one-form $\theta$ satisfy \eqref{eq:normalized-constraints}-\eqref{eq:normalized-ricci}. Then the $\R^{n+2}$-valued one-form
\[
\Omega(X)=-dF(CX)-\theta(X)\eta
\]
is closed. If $\Omega$ is exact, in particular if $M$ is simply connected, there is a smooth function $f$ such that
\begin{equation}\label{eq:global-jacobi-potential}
\begin{aligned}
(\Delta+n)f&=0,\\
C&=\operatorname{Hess}f+fg-\psi A,\\
\psi&=\frac{\ip{\operatorname{Hess}f+fg}{A}}n,\\
\theta&=d\psi+A(\nabla f,\cdot).
\end{aligned}
\end{equation}
If $M$ is compact and $\Omega$ is exact, then $C=0$.
\end{lemma}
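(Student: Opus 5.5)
The plan is to verify closedness of $\Omega$ by a direct Euclidean computation using \eqref{eq:gauss}. Then, given a primitive $\Phi$, I will decompose $\Phi$ along $dF(TM)$, $F$ and $\eta$ and read off \eqref{eq:global-jacobi-potential} from $d\Phi=\Omega$. In the compact case, I will identify the $n$-eigenspace of the Laplacian on a covering of $C_p$ with restrictions of linear functions.

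\textbf{Closedness.} Since $F$ takes values in $C_p\subset\Sph^{n+1}$, the second fundamental form in the sphere is $h(X,Y)=\ip{AX}{Y}\eta$. By \eqref{eq:gauss},
\[
D_X\bigl(dF(CY)\bigr)=dF(\nabla_X(CY))+\ip{AX}{CY}\eta-\ip{X}{CY}F,
\qquad
D_X\bigl(\theta(Y)\eta\bigr)=X(\theta(Y))\eta-\theta(Y)\,dF(AX).
\]
I will antisymmetrize to form $d\Omega(X,Y)=D_X(\Omega(Y))-D_Y(\Omega(X))-\Omega([X,Y])$ and examine its three components.
\begin{itemize}
\item The $F$-component is $\ip{X}{CY}-\ip{Y}{CX}=0$, because $C$ is symmetric.
\item The tangential component is $-dF\bigl((\nabla_XC)Y-(\nabla_YC)X+\theta(X)AY-\theta(Y)AX\bigr)$, which vanishes by \eqref{eq:normalized-codazzi}.
\item The $\eta$-component is $-(\ip{ACY}{X}-\ip{ACX}{Y})-d\theta(X,Y)=\ip{[A,C]X}{Y}-d\theta(X,Y)$, which vanishes by \eqref{eq:normalized-ricci}.
\end{itemize}
Hence $\Omega$ is closed. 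When $M$ is simply connected, $\Omega$ is exact by the de Rham theorem.

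\textbf{Potential.} Write $\Omega=d\Phi$ and decompose $\Phi=dF(V)+uF+\varphi\eta$, with $V$ tangent and $u,\varphi$ smooth functions. Differentiating with \eqref{eq:gauss} and the Weingarten formula $D_X\eta=-dF(AX)$ gives
\[
d\Phi(X)=dF\bigl(\nabla_XV+uX-\varphi AX\bigr)+\bigl(X(u)-\ip VX\bigr)F+\bigl(\ip{AV}{X}+X(\varphi)\bigr)\eta.
\]
Comparing with $\Omega(X)=-dF(CX)-\theta(X)\eta$ gives three identities:
\[
V=\nabla u,\qquad C=-\operatorname{Hess}u-ug+\varphi A,\qquad \theta=-A\nabla u-d\varphi.
\]
Set $f=-u$ and $\psi=-\varphi$. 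Then $C=\operatorname{Hess}f+fg-\psi A$ and $\theta=d\psi+A(\nabla f,\cdot)$. Since $\Tr A=0$, the constraint $\Tr C=0$ becomes $(\Delta+n)f=0$. Since $|A|^2=n$, the constraint $\ip AC=0$ gives the stated formula for $\psi$.

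\textbf{Compact case.} Suppose $M$ is compact. Then $f$ is an eigenfunction of $-\Delta$ with eigenvalue $n$ on a finite covering of $C_p$, and this eigenspace is the step I expect to need the most care.
\begin{itemize}
\item If $2\le p\le n-2$, both factors are simply connected, so $M=C_p$. Product eigenvalues are $d_1(d_1+p-1)n/p+d_2(d_2+n-p-1)n/(n-p)$, and they equal $n$ only for $(d_1,d_2)=(1,0)$ or $(0,1)$.
\item If $p=1$ or $p=n-1$, then $M$ is $\Sph^1(k/\sqrt n)\times\Sph^{n-1}$ for the covering degree $k$. The eigenvalues are $j^2n/k^2+d(d+n-2)n/(n-1)$, and they equal $n$ only for $(j,d)=(k,0)$ or $(0,1)$.
\end{itemize}
In every case, $f=\ip vF$ for a constant vector $v\in\R^{n+2}$.

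For such $f$, the Gauss formula gives $\operatorname{Hess}\ip vF=\ip vh-\ip vFg=\ip v\eta A-fg$, so $\operatorname{Hess}f+fg=\ip v\eta A$. It follows that $C=(\ip v\eta-\psi)A$. Finally, $\ip AC=0$ and $|A|^2=n>0$ force $\psi=\ip v\eta$, and hence $C=0$.
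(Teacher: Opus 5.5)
Your proposal is correct and follows essentially the same route as the paper. Closedness is checked component by component: the tangential part by the normalized Codazzi equation, the radial part by symmetry of $C$, and the $\eta$-part by the normalized Ricci equation. You decompose the primitive along $dF(TM)$, $F$, $\eta$, and your $V,u,\varphi$ are exactly the paper's $-\nabla f,-f,-\psi$. In the compact case you use the same spectral count to show $f=\ip{v}{F}$, hence $\operatorname{Hess}f+fg=\ip{v}{\eta}A$ and $C=0$.

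One cosmetic slip: in the circle case the eigenvalue $n$ also arises for $j=-k$, not only $j=k$. This does not affect the conclusion.
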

\begin{proof}
The tangential component of $d\Omega$ vanishes by \eqref{eq:normalized-codazzi}, its radial component by symmetry of $C$, and its $\eta$ component is $\ip{[A,C]X}{Y}-d\theta(X,Y)=0$. If $\Omega$ is exact, choose $W:M\to\R^{n+2}$ with $dW=\Omega$ and put $f=-\ip WF$, $\psi=-\ip W\eta$. Differentiating $f$ gives $df(X)=-\langle W,dF(X)\rangle$, so
\[
W=-fF-dF(\nabla f)-\psi\eta.
\]
Comparing the tangential and normal components of $dW=\Omega$, and then using $\Tr C=\ip AC=0$ and $|A|^2=n$, gives \eqref{eq:global-jacobi-potential}. No boundedness assumption on $W$ or $f$ is needed. Suppose now that $M$ is compact. If both factors of $C_p$ have dimension at least two, $C_p$ is simply connected and $M=C_p$ as a Riemannian covering. Its Laplace eigenvalues are
\[
\frac np d(d+p-1)+\frac n{n-p}e(e+n-p-1),
\qquad d,e\in\mathbb Z_{\ge0}.
\]
The eigenvalue $n$ occurs only for $(d,e)=(1,0),(0,1)$. If one factor is a circle, a compact connected cover has some degree $k$. Its circle eigenvalues are $nj^2/k^2$, $j\in\mathbb Z$, and its transverse sphere eigenvalues are $n\ell(\ell+n-2)/(n-1)$, $\ell\ge0$. Their sum equals $n$ only for $(\ell,j)=(0,\pm k)$ or $(1,0)$. Thus in both cases $f$ is the restriction of an ambient linear function, pulled back to the cover. The Gauss formula then gives $\operatorname{Hess}f+fg=\ip v\eta A$ for a constant vector $v$, and its projection orthogonal to $A$ is zero. Hence $C=0$.
\end{proof}

The following proposition excludes compact pointed limits directly.
\begin{proposition}\label{prop:q2-no-compact-limit}
Let $M_j^n\to\Sph^{n+2}(1)$, $n\ge3$, be a sequence of closed connected minimal immersions with constant $Q_j>n$ and $Q_j\to n$. No smooth complete pointed subsequential limit is compact.
\end{proposition}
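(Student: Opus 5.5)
We argue by contradiction. Suppose that, after passing to a subsequence, the pointed immersions converge as in Lemma~\ref{lem:immersion-compactness} to a compact limit $F_\infty:M_\infty\to\Sph^{n+2}$. The bound $|h_j|^2\le Q_j\le 2n$ makes that lemma applicable. Since the limit is compact, the lemma gives global diffeomorphisms $\phi_j:M_\infty\to M_j$ with $\phi_j^*g_j\to g_\infty$ and $F_j\circ\phi_j\to F_\infty$ in $C^\infty$. The spectrum of the fundamental matrix depends continuously on the immersion, so $Q\equiv n$ on the closed limit. Lu's first-gap theorem with $n\ge3$ then identifies $F_\infty$ as a Clifford hypersurface $C_p$ in a great $\Sph^{n+1}$, with $\lambda_2\equiv0$ and $\lambda_1\equiv n$. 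Uniform convergence therefore gives $a_j\to n$ and $b_j\to0$ uniformly. In particular, $a_j-b_j\ge n/2$ on all of $M_j$ for large $j$, so the eigenframe $\nu_1,\nu_2$ is globally defined up to sign.

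To remove the sign ambiguity, pass to the finite cover of $M_j$ on which both eigenline bundles are orientable, and correspondingly to a finite cover of $M_\infty$. These covers are at most fourfold, can be chosen compatibly through $\phi_j$, and remain compact. The limit is still a compact covering of $C_p$.

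Next set $\sigma_j=\sqrt{\max_{M_j}\lambda_2}$. If $\sigma_j=0$, then $\lambda_2\equiv0$ and $S=Q_j$ is constant. Proposition~\ref{prop:rank-one} together with the Peng--Terng second gap then forces $Q_j\ge n+c_n$, which is impossible for large $j$. Hence $\sigma_j>0$, and we put $C_j=B_j/\sigma_j$ and $\theta_j=\omega_j/\sigma_j$. Then $\sup|C_j|=1$, and $C_j,\theta_j$ satisfy \eqref{eq:normalized-constraints}--\eqref{eq:normalized-ricci} with $A_j\to A$ smoothly. Here $A$ is the Clifford shape operator, which is invertible with eigenvalues $-\sqrt{m/p}$ and $\sqrt{p/m}$ in the notation of Proposition~\ref{prop:rank-one}. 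The remark after \eqref{eq:normalized-div} shows that the eigenframe coefficients converge, because the normal spectral gap stays bounded below. Lemma~\ref{lem:local-elliptic}, applied on finitely many coordinate charts, gives uniform $C^{k+1}$ bounds for $C_j$ and $C^k$ bounds for $\theta_j$. By Arzel\`a--Ascoli, a subsequence converges to a pair $(C,\theta)$ on the compact limit satisfying the same normalized equations. The $C^0$ convergence preserves $\max|C|=1$, so $C\ne0$.

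The key point, which I expect to be the delicate step, is exactness of the limiting one-form $\Omega$ in Lemma~\ref{lem:q2-global-potential}. On each $M_j$ the Weingarten formula gives
\[
D\nu_2(X)=-dF_j(B_jX)-\omega_j(X)\nu_1 .
\]
Dividing by $\sigma_j$, the form $\Omega_j(X)=-dF_j(C_jX)-\theta_j(X)\nu_1$ is the differential of the global map $\nu_2/\sigma_j$, so every period of $\Omega_j$ vanishes. Pulling back by $\phi_j$, the forms $\phi_j^*\Omega_j$ converge in $C^0$ to $\Omega=-dF_\infty(C\cdot)-\theta\,\eta$, since $\nu_1\circ\phi_j\to\eta$. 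The integral of $\Omega$ over any fixed closed loop in $M_\infty$ is therefore the limit of zero periods, hence zero. Thus $\Omega$ is exact.

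One must check that $\nu_1\circ\phi_j$ converges to $\eta$ rather than to $-\eta$. Using the projector formula for $P_1$ and fixing signs at a basepoint handles this, since the loops are fixed and the convergence is global. Lemma~\ref{lem:q2-global-potential}, applied on the compact covering of $C_p$ with exact $\Omega$, now yields $C=0$. This contradicts $\max|C|=1$ and proves Proposition~\ref{prop:q2-no-compact-limit}.
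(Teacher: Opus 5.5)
Your proposal is correct and follows essentially the same route as the paper. Both arguments use global convergence to a compact Clifford covering via Lu's theorem, the Peng--Terng exclusion of $\lambda_2\equiv0$, normalization of the second shape operator by $\sqrt{\max\lambda_2}$ with Lemma~\ref{lem:local-elliptic}, vanishing periods of $\Omega_j=d(\nu_{2,j}/\sigma_j)$ passed to the limit, and the compact case of Lemma~\ref{lem:q2-global-potential}. The only difference is cosmetic: instead of passing to orientation covers and fixing signs at a basepoint, the paper gets global eigenframes directly by projecting the limit's global normals $\eta$ and $\zeta$ onto the nearby eigenlines; these projections are nowhere zero for large $j$ and automatically give $\nu_{1,j}\to\eta$.
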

\begin{proof}
Suppose that a pointed limit $M_\infty$ is compact. By Lemma \ref{lem:immersion-compactness}, the convergence is global through diffeomorphisms. The limiting immersion has $Q=n$, so Lu's first pinching theorem \cite[Theorem 6]{Lu11} implies that its image is a Clifford hypersurface in a fixed great $\Sph^{n+1}$. In particular $b_\infty=0$, $a_\infty=S_\infty=n$, and Proposition \ref{prop:rank-one} identifies it as a compact Clifford covering. Let $\eta$ be its global first normal and $\zeta$ its fixed extra ambient normal. Project these fields onto the two normal eigenline bundles of the nearby immersions and normalize. Since $a_j-b_j\to n$ uniformly, these projections are nowhere zero for large $j$ and give global normal eigenframes $(\nu_{1,j},\nu_{2,j})$ on the identified domains. Write $A_j,B_j$ for the resulting shape operators and $\omega_j$ for the normal connection form.\par
If $b_j\equiv0$ along a subsequence, Proposition~\ref{prop:rank-one} reduces the immersion to a hypersphere, and the hypersurface second gap \cite{PengTerng} excludes $S_j=Q_j>n$ tending to $n$. Otherwise put
\[
t_j=\sqrt{\max_{M_j}b_j}>0,\qquad
C_j=B_j/t_j,\qquad\theta_j=\omega_j/t_j.
\]
Then $|C_j|\le1$ and $\max|C_j|=1$. The normalized equations and Lemma \ref{lem:local-elliptic} give a smooth global limit $(C,\theta)$ with $\max|C|=1$.
By the Weingarten formula,
\[
\Omega_j(X):=-dF_j(C_jX)-\theta_j(X)\nu_{1,j}
      =d(\nu_{2,j}/t_j)(X).
\]
Every $\Omega_j$ is exact, so its smooth global limit $\Omega$ has zero integral along every closed loop. Thus $\Omega$ is exact on $M_\infty$; its values lie in the fixed $\R^{n+2}$ containing the limiting Clifford hypersurface. The compact-cover conclusion of Lemma~\ref{lem:q2-global-potential} gives $C=0$, contradicting $\max|C|=1$.
\end{proof}

\section{The case $q=2$}\label{sec:codimension-two}

We begin with the equality case $Q=n$, which is the basic rigidity for Theorem~\ref{thm:q2-onlyQ-low-dimension-gap}.

\begin{proposition}\label{prop:q2-complete-equality-alln}
For every $n\ge3$, every complete connected minimal immersion $M^n\to\Sph^{n+2}(1)$ with $Q\equiv n$ is a covering immersion of a standard Clifford hypersurface in a fixed great $\Sph^{n+1}$. In particular, $S\equiv n$, $\lambda_2\equiv0$, and $\nabla h\equiv0$.
\end{proposition}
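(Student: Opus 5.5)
The plan is to reduce to the rank-one situation of Proposition~\ref{prop:rank-one}: once $\lambda_2=b\equiv0$, we get $a=S=Q=n>0$. Then $\rank\mathcal A=1$ everywhere, and the complete case of that proposition gives a covering of a standard Clifford hypersurface in a fixed great $\Sph^{n+1}$, with $S\equiv n$ and $\nabla h\equiv0$. So everything hinges on proving $b\equiv0$ on a complete, possibly noncompact, $M$ with $Q\equiv n$.

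\emph{Step 1 (subharmonicity).} With $\varepsilon=0$, the identity $a+2b=n$ gives $n-a=2b$. Inequality \eqref{eq:q2-onlyQ-subharmonic}, in its first (unrelaxed) form, becomes
\[
\Delta a\ \ge\ 2|T_\nu|^2+4ab-2|[A,B]|^2\ \ge\ 2|T_\nu|^2\ \ge 0,
\]
using \eqref{eq:commutator-bound}. Thus $a$ is subharmonic and $b=(n-a)/2$ is superharmonic, and $n/3\le a\le n$.

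\emph{Step 2 (rigidity at the maximum of $a$).} Choose $x_k$ with $a(x_k)\to\sup a$. Apply Lemma~\ref{lem:immersion-compactness} to the pointed immersions $(M,g,x_k)$; this is allowed since $S\le n$. The result is a complete limit $M'$ with $Q\equiv n$ on which $a$ attains its supremum at the basepoint. The strong maximum principle then makes $a$, hence $b$, constant on $M'$, and forces equality throughout Step 1. Suppose $b>0$ and $a>b$ on $M'$. Then \eqref{eq:eigenvalue-laplacian-general} gives $T_1=0$ and $\omega=0$, so $\nabla A=0$. Equality in \eqref{eq:commutator-bound} forces the following structure:
\begin{itemize}
\item $A$ has eigenvalues $\pm\sqrt{a/2}$ on lines and $0$ on an $(n-2)$-dimensional kernel;
\item $B$ is supported only in the off-diagonal entries between the $\pm$ eigenlines.
\end{itemize}
The eigendistributions are parallel, so planes joining two distinct eigendistributions are flat. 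Take $Z\in\ker A$, which exists since $n\ge3$, and $Y$ in a $\pm$ eigenline. The Gauss equation then gives sectional curvature $1\neq0$, a contradiction. The degenerate case $a=b=n/3$ is treated the same way, with any orthonormal frame playing the role of the eigenframe. Hence $b\equiv0$ on $M'$, so $\sup_M a=n$ and $\inf_M b=0$.

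\emph{Step 3 (main obstacle: propagating $b\equiv 0$).} Step 2 only gives $\inf b=0$. For a closed $M$ one would conclude at once by integrating $\Delta a\ge0$ (or simply by Lu's theorem), but on a noncompact manifold a bounded subharmonic function need not be constant. I would first try a blow-up at the other extreme. Take $y_k$ with $b(y_k)\to\sup b$, pass to a limit $M''$, and normalize $C=B/\sqrt{\sup b}$ and $\theta=\omega/\sqrt{\sup b}$. Then use the overdetermined system \eqref{eq:normalized-constraints}--\eqref{eq:normalized-div} together with the quantitative deficit in Step 1, $\Delta a\ge 2|T_\nu|^2+2(2ab-|[A,B]|^2)$. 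The aim is to show that the structure forced above (a two-dimensional support of $B$ that is incompatible with the Gauss equation on kernel planes) yields a pointwise gain $4ab-2|[A,B]|^2\ge c_n b\,\Phi$ for a nonnegative quantity $\Phi$ that is controlled from below near a maximum of $b$. Combining this gain with $\Delta b\le0$ should force $\sup b=0$.

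Failing that, the fallback is real-analyticity of minimal immersions. Step 2 shows that the structure of a Clifford covering appears in every limit at the maximum of $a$. If one can instead obtain $b=0$ on an open subset of $M$ itself, unique continuation extends $b\equiv0$ to all of the connected manifold $M$. Getting this open-set vanishing, rather than vanishing only in the limit, is where I expect the real difficulty to lie.
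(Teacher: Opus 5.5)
There is a genuine gap, and it is the one you flag yourself in Step~3. Steps~1 and~2 are sound, with one exception. In the degenerate case $a=b$ the normal bundle is not flat, so no orthonormal frame has $\omega=0$; there one should use $\nabla h=0$ and the parallel distribution $\{Z:h(Z,\cdot)=0\}$ instead of eigendistributions of $A$. But Steps~1 and~2 only give $\inf_M b=0$, possibly not attained. On a complete noncompact manifold this carries no information: positive superharmonic functions with zero infimum exist on every nonparabolic manifold. Neither continuation you propose closes this. A blow-up at a maximum of $b$ produces no contradiction, since superharmonicity is consistent with an interior maximum. A pointwise gain $-\Delta b\ge c\,b\,\Phi$ alone cannot force $b\equiv0$ either: on hyperbolic space there are positive solutions of $-\Delta u=\lambda u$ with $\lambda>0$. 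Unique continuation would need vanishing on an open subset of $M$, which you cannot produce. In fact a single zero of $b$ already suffices, by the strong minimum principle for the superharmonic function $b\ge0$. So the entire difficulty is the case $b>0$ everywhere on a noncompact $M$.

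The missing idea is to turn this case into a compactness statement by coupling the superharmonicity of $b$ with the Ricci curvature. If $b>0$ everywhere, put $V=-\Delta b/b$. Your Step~1, together with the first inequality in \eqref{eq:commutator-bound}, gives $V\ge 2a-\operatorname{osc}(A)^2\ge0$. Combine this with:
\begin{itemize}
\item the Gauss equation,
\item $\|B\|_{\mathrm{op}}^2\le\frac{n-1}{n}b$,
\item the algebraic bound $\|A\|_{\mathrm{op}}^2+\operatorname{osc}(A)^2/(2n)\le\frac{n+\sqrt{n^2-2n+4}}{2n}|A|^2$ of Lemma~\ref{lem:joint-spectral-bound},
\item and $a=n-2b$.
\end{itemize}
These yield $\operatorname{Ric}+\frac1{2n}Vg\ge c_ng$ with $c_n=\frac12\bigl(n-\sqrt{n^2-2n+4}\bigr)>0$. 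Thus $-\frac1{2n}\Delta+\operatorname{Ric}$ is uniformly positive in the spectral sense, witnessed by the positive function $b$. The Bonnet--Myers theorem with potential of Catino--Roncoroni \cite{CatinoRoncoroni24}, applied through the conformal approach of \cite{MMRS}, then shows that $M$ is compact. Lu's first pinching theorem makes $M$ a Clifford hypersurface, contradicting $b>0$. Hence $b\equiv0$, and your final reduction through the scalar Simons identity and Proposition~\ref{prop:rank-one} finishes the proof. With the dichotomy ``$b$ has a zero'' versus ``$b>0$ everywhere'', the blow-up in your Step~2 is not needed at all.
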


Before proving Proposition \ref{prop:q2-complete-equality-alln}, we establish the following algebraic estimate.
\begin{lemma}\label{lem:joint-spectral-bound}
For $n \ge 3$, let $A$ be a trace-free symmetric endomorphism of an $n$-dimensional Euclidean space,
\begin{equation}\label{eq:joint-spectral-bound}
\|A\|_{\mathrm{op}}^2+
\frac{\operatorname{osc}(A)^2}{2n}
\le\frac{n+\sqrt{n^2-2n+4}}{2n}|A|^2,
\end{equation}
where $\operatorname{osc}(A)$ is the difference between the largest and smallest eigenvalues of $A$.
\end{lemma}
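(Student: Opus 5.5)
The plan is to reduce \eqref{eq:joint-spectral-bound} to a two-variable quadratic optimization. Both sides are homogeneous of degree two and invariant under $A\mapsto -A$, so I normalize $|A|^2=1$. Let $\kappa_1\ge\cdots\ge\kappa_n$ be the eigenvalues of $A$, and replace $A$ by $-A$ if necessary so that $\|A\|_{\mathrm{op}}=\kappa_1$. Set $x=\kappa_1\ge0$ and $y=-\kappa_n$. Since $\Tr A=0$ and $A\ne0$, we have $y>0$, and the left side of \eqref{eq:joint-spectral-bound} becomes
\[
q(x,y)=x^2+\frac{(x+y)^2}{2n}.
\]

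Next I encode the remaining eigenvalues by a single constraint. The $n-2$ middle eigenvalues $\kappa_2,\dots,\kappa_{n-1}$ have sum $-(x-y)$ and sum of squares $1-x^2-y^2$. Cauchy--Schwarz on these $n-2$ numbers (here $n\ge3$ is used, so there is at least one of them) gives
\[
(x-y)^2\le (n-2)(1-x^2-y^2),
\]
that is,
\[
(n-1)x^2-2xy+(n-1)y^2\le n-2 .
\]
I then discard all other information: the ordering, $x\ge y$, and positivity. This relaxation is harmless, because we only need an upper bound for $q$. The problem becomes: maximize the quadratic form $q$ on the solid ellipse defined by the positive definite form
\[
p(x,y)=\frac{(n-1)x^2-2xy+(n-1)y^2}{n-2}\le1 .
\]

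The maximum of $q$ on $\{p\le1\}$ is the largest root $\lambda$ of the generalized eigenvalue problem $\det(Q-\lambda P)=0$, where
\[
Q=\begin{pmatrix}1+\frac1{2n}&\frac1{2n}\\[2pt] \frac1{2n}&\frac1{2n}\end{pmatrix},\qquad
P=\frac1{n-2}\begin{pmatrix}n-1&-1\\ -1&n-1\end{pmatrix}.
\]
This is a $2\times2$ determinant, so it yields a quadratic equation in $\lambda$. The claim is that its larger root is
\[
\lambda=\frac{n+\sqrt{n^2-2n+4}}{2n}.
\]

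The main obstacle is this explicit computation: expanding the determinant and checking that the discriminant simplifies to a multiple of $n^2-2n+4$. I would first multiply through by $2n(n-2)$ to clear denominators before expanding. As a sanity check I would test $n=3$, where the bound reads $(3+\sqrt7)/6$. Because the middle eigenvalues were relaxed, the bound need not be attained, and no equality analysis is required. The inequality \eqref{eq:joint-spectral-bound} then follows by undoing the normalization $|A|^2=1$.
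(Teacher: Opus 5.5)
Your proof is correct, and the computation you flagged as the main obstacle does go through. After multiplying $Q-\lambda P$ by $2n(n-2)$, its determinant equals $2n(n-2)\bigl(2n^2\lambda^2-2n^2\lambda+(n-2)\bigr)$. The quadratic factor has discriminant $4n^2(n^2-2n+4)$, so its larger root is exactly $\frac{n+\sqrt{n^2-2n+4}}{2n}$.

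The paper reaches the same $2\times2$ problem by a different reduction. It does not normalize the sign of $A$ or eliminate the middle eigenvalues. Instead, using that the eigenvalue vector $\lambda$ is orthogonal to $\mathbf{1}$, it writes
$\lambda_i^2+(2n)^{-1}(\lambda_j-\lambda_k)^2=\langle(v\otimes v+(2n)^{-1}d\otimes d)\lambda,\lambda\rangle$, with $v=e_i-\mathbf{1}/n$ and $d=e_j-e_k$. It then bounds the top eigenvalue of this rank-two operator, whose trace is $1$, using only $|v|^2=(n-1)/n$, $|d|^2=2$ and $|\langle v,d\rangle|\le1$. Finally it maximizes over all triples $i,j,k$.

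That version treats every index configuration uniformly and needs no sign normalization. The constant comes straight out of the $2\times2$ eigenvalue formula with no determinant to expand. Its extremal case $|\langle v,d\rangle|=1$, i.e.\ $i\in\{j,k\}$, is exactly the configuration your sign flip fixes: the eigenvalue of largest modulus is always an endpoint of the spectrum.

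Your version makes explicit that Cauchy--Schwarz on the middle eigenvalues is the only information used. Consistently with the paper, your quadratic has root sum $1$ and root product $\frac{n-2}{2n^2}$, matching the paper's trace and extremal determinant.

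Your reduction also settles the attainment question you left open. The top generalized eigenvector can be taken with $x\ge y>0$, since the required inequalities reduce to $n-1\le\sqrt{n^2-2n+4}\le n+2$. The common middle value $-(x-y)/(n-2)$ then lies in $[-y,x]$. So taking all middle eigenvalues equal realizes equality, and your relaxation loses nothing.
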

\begin{proof}
Let $\lambda=(\lambda_1,\ldots,\lambda_n)$ be the eigenvalue vector of $A$, so $\lambda\perp\mathbf1=(1,\ldots,1)$. Let $e_1,\ldots,e_n$ be the standard basis of $\R^n$.
For indices $i,j,k$ with $j\ne k$, put
$v=e_i-\mathbf1/n$ and $d=e_j-e_k$.
Then $|v|^2=(n-1)/n$, $|d|^2=2$, and $|\langle v,d\rangle|\le1$.
Here $(v\otimes v)z=\langle v,z\rangle v$. The largest eigenvalue of
$v\otimes v+(2n)^{-1}d\otimes d$ is at most
\[
\frac12\left(1+
\sqrt{\left(\frac{n-2}{n}\right)^2+\frac2n}\right)
=\frac{n+\sqrt{n^2-2n+4}}{2n}.
\]
Apply this bound to
$\lambda_i^2+(2n)^{-1}(\lambda_j-\lambda_k)^2$
and maximize over $i,j,k$.
\end{proof}

\begin{proof}[Proof of Proposition~\ref{prop:q2-complete-equality-alln}]
Write $a=\lambda_1$ and $b=\lambda_2$. Since $Q=n$, we have $a=n-2b$, $S=n-b$, and $0\le b\le n/3$. The eigenvalues are smooth, being affine functions of $S$. By \eqref{eq:q2-onlyQ-subharmonic}, $\Delta b\le0$, including where $a=b$. If $b$ vanishes somewhere, the strong minimum principle gives $b\equiv0$. Suppose instead that $b>0$ everywhere, and set $V=-\Delta b/b\ge0$. At each point choose an orthonormal normal eigenframe with shape operators $A,B$. The first inequality in \eqref{eq:q2-onlyQ-subharmonic}, together with $\Delta a=-2\Delta b$ and $n-a=2b$, gives
\[
-\Delta b\ge2ab-|[A,B]|^2,
\qquad
V\ge2a-\operatorname{osc}(A)^2.
\]
The Rayleigh quotient argument ensures that these inequalities remain valid at coincident eigenvalues. Put
\[
r_n=\sqrt{n^2-2n+4},\qquad
\alpha_n=\frac{n+r_n}{2n},\qquad
c_n=\frac{n-r_n}{2}>0.
\]
The Gauss equation, \eqref{eq:joint-spectral-bound}, and
$\|B\|_{\mathrm{op}}^2\le(n-1)b/n$ yield
\begin{equation}\label{eq:q2-equality-potential-ricci}
\begin{aligned}
\operatorname{Ric}+\frac{1}{2n}Vg
&\ge\left(n-1-\left(\alpha_n-\frac1n\right)a
              -\frac{n-1}{n}b\right)g\\
&=\left(c_n+\frac{r_n-1}{n}b\right)g\\
&\ge c_ng.
\end{aligned}
\end{equation}
Here we used $a=n-2b$ and $n\ge3$. Following the conformal approach in \cite[Remark~2.3]{MMRS}, we apply the generalized Bonnet--Myers theorem of Catino and Roncoroni \cite[Theorem~1.1]{CatinoRoncoroni24}. In their notation, take $u=b$, $\alpha=\beta=\gamma=0$, $k=1/(2n)$, $\lambda=c_n/(n-1)$, and the auxiliary symmetric two-tensor equal to $\operatorname{Ric}$. Then $-\Delta u=Vu$, the required curvature inequality is \eqref{eq:q2-equality-potential-ricci}, and the parameter conditions hold because $k>0$ and
\[
k-\frac{n-1}{4}k^2=\frac{7n+1}{16n^2}>0.
\]
Hence $M$ is compact. Lu's first pinching theorem \cite[Theorem 6]{Lu11} then identifies the image as a Clifford hypersurface, contradicting $b>0$. Consequently $b\equiv0$ and $S=a=n$. The scalar Simons identity gives $\nabla h=0$, and Proposition \ref{prop:rank-one} identifies the immersion as a covering of a standard Clifford hypersurface in a fixed great $\Sph^{n+1}$.
\end{proof}

\begin{lemma}\label{cor:q2-onlyQ-uniform-alln}
Fix $n\ge3$. Let $M_j^n\to\Sph^{n+2}(1)$ be a sequence of closed connected minimal immersions with constant $Q_j>n$ and $Q_j\to n$. Then
\[
\max_{M_j}\lambda_{2,j}\rightarrow0,\qquad
\max_{M_j}|S_j-n|\rightarrow0,
\qquad
\sup_{M_j}|\nabla^k h_j|\rightarrow0
\quad\text{for every fixed }k\ge1.
\]
For any choice of basepoints, a subsequence converges smoothly to a complete pointed immersion covering a Clifford hypersurface.
\end{lemma}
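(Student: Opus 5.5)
The plan is a contradiction argument combined with the compactness of Lemma~\ref{lem:immersion-compactness} and the rigidity of Proposition~\ref{prop:q2-complete-equality-alln}.

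First, a uniform bound on $S$. Since $Q_j\to n$ and $S_j\le Q_j$, we have $S_j\le n+1$ for large $j$. So Lemma~\ref{lem:immersion-compactness} applies with $C_0=n+1$, giving uniform derivative bounds of every order in graph coordinates of fixed size. It also shows that every pointed sequence has a smoothly convergent subsequence with a complete minimal limit. On such a limit, $S$ and $\lambda_2$ are locally uniform limits; $\lambda_2$ is continuous as an eigenvalue of $\mathcal A$. Hence $Q_\infty\equiv\lim Q_j=n$. Proposition~\ref{prop:q2-complete-equality-alln} then says the limit covers a standard Clifford hypersurface, with $\lambda_2\equiv0$, $S\equiv n$ and $\nabla h\equiv0$. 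This already proves the final assertion of the lemma for any choice of basepoints.

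Second, uniformity over the whole of $M_j$. Suppose, for instance, that $\max_{M_j}\lambda_{2,j}\ge\epsilon_0>0$ along a subsequence. Choose basepoints $x_j$ attaining the maximum, and apply the first step to $(M_j,x_j)$. Smooth convergence at the basepoint gives $\lambda_{2,j}(x_j)\to\lambda_{2,\infty}(x_\infty)=0$, a contradiction. The same argument handles $\max|S_j-n|$, taking basepoints at the maximizers of $|S_j-n|$. It also handles $\sup|\nabla^kh_j|$: the derivative bounds of Lemma~\ref{lem:immersion-compactness} make $\nabla^kh_j\circ\phi_j\to\nabla^kh_\infty$ locally uniformly, and $\nabla^kh_\infty=0$ for $k\ge1$ because $h_\infty$ is parallel. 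Since each $M_j$ is closed, all these suprema are attained, so basepoints can always be taken at maximizers. Alternatively, one can use $\lambda_2=Q-S$ to deduce the $S$ statement from the $\lambda_2$ statement, since $Q_j\to n$.

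The main obstacle is making sure that the convergence in Lemma~\ref{lem:immersion-compactness} controls the basepoint values of $\lambda_2$ and of $\nabla^kh$, and not only of $F_j$ and $g_j$. For $\nabla^kh$, the identities $h_j=\mathrm{II}^E_j+g_jF_j$ together with $C^\infty_{\mathrm{loc}}$ convergence of $F_j\circ\phi_j$ and $\phi_j^*g_j$ give convergence of $h_j$ and all its covariant derivatives. The quantities involved are built from the normal bundle, which is itself determined by $dF_j$ and $F_j$. For $\lambda_2$, $\mathcal A$ converges as an endomorphism of the ambient bundle (extended by zero, as in the preceding section), so its ordered eigenvalues converge.

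The point where Proposition~\ref{prop:q2-complete-equality-alln} is genuinely needed is this: a limit with $Q\equiv n$ is noncompact in general, so Lu's theorem alone would not suffice, and the complete-case classification is what supplies $\lambda_2\equiv0$ and $\nabla h\equiv0$.
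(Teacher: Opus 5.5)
Your proposal is correct and follows essentially the same route as the paper. Both arguments get compactness from $S_j\le Q_j$, identify every pointed limit via Proposition~\ref{prop:q2-complete-equality-alln}, and argue by contradiction with basepoints chosen where a uniform bound fails, using $S_j=Q_j-\lambda_{2,j}$ for the $S$ statement. Your remarks on why $\lambda_2$ and $\nabla^k h$ pass to the limit only make explicit what the paper leaves implicit.
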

\begin{proof}
Since $S_j\le Q_j$, Lemma \ref{lem:immersion-compactness} gives complete smooth pointed limits for arbitrary basepoints. Each limit satisfies $Q=n$, so Proposition~\ref{prop:q2-complete-equality-alln} applies. If one of the asserted uniform convergences failed, we could choose basepoints where it fails and pass to a smooth pointed limit. The limiting second fundamental form is parallel and has rank one, which gives a contradiction. Finally $S_j=Q_j-\lambda_{2,j}$ gives the asserted uniform convergence of $S_j$.
\end{proof}

We now prove Theorem \ref{thm:q2-onlyQ-low-dimension-gap}. To allow for variation of the second eigenvalue, we normalize by its maximum over each closed domain.

\begin{proof}[Proof of Theorem~\ref{thm:q2-onlyQ-low-dimension-gap}]
Suppose $Q_j=n+\varepsilon_j>n$ is constant and $\varepsilon_j\to0$. Put
\[
b_j=\lambda_{2,j}=Q_j-S_j,\qquad
\beta_j=\max_{M_j}b_j,\qquad a_j=Q_j-2b_j.
\]
By Proposition \ref{prop:q2-complete-equality-alln} and Lemma \ref{cor:q2-onlyQ-uniform-alln}, every choice of basepoints admits a complete pointed Clifford limit, $\beta_j\to0$, and all geometric derivatives are uniformly bounded before normalization. Proposition \ref{prop:q2-no-compact-limit} excludes compact limits. Thus every complete limit has a circle factor. Since $n\ge3$, the Clifford product with a circle factor has fundamental group $\mathbb Z$; every nontrivial subgroup has finite index and gives a compact cover. A noncompact connected cover is therefore the full cylinder $\mathbb R\times\Sph^{n-1}(\sqrt{(n-1)/n})$. No volume-growth hypothesis is needed. If $\beta_j=0$ along a subsequence, the fundamental matrix has rank one everywhere because $Q_j>0$. Proposition \ref{prop:rank-one} gives a reduction to a hypersphere. The second gap theorem for hypersurfaces \cite{PengTerng} excludes $S_j=Q_j>n$ tending to $n$. We may therefore assume that $\beta_j>0$. For a local unit section of the first normal eigenline, let $A_j$ be its shape operator. The Clifford cylinder limits give
\[
\left|\Tr(A_j^3)\right|\rightarrow
\frac{n(n-2)}{\sqrt{n-1}}>0
\quad\text{uniformly}.
\]
Thus $\Tr(A_j^3)<0$ selects a global smooth first unit normal for large $j$. If its normal complement is nonorientable, pass to the connected double cover that orients this line bundle. This preserves closedness and all numerical invariants. Lemma \ref{lem:immersion-compactness} and Proposition \ref{prop:q2-no-compact-limit} apply to this covered sequence as well, so its complete pointed limits are still full cylinders. Since $a_j-b_j$ tends uniformly to $n$, both normal eigenline projectors are smooth even where $b_j=0$. Denote the resulting unit normals by $\nu_{1,j},\nu_{2,j}$, their shape operators by $A_j,B_j$, and write the normal connection as $\nabla^\perp\nu_{1,j}=\omega_j\nu_{2,j}$. We normalize by $\sqrt{\beta_j}$, which is constant on each domain:
\[
C_j=\frac{B_j}{\sqrt{\beta_j}},\qquad
\theta_j=\frac{\omega_j}{\sqrt{\beta_j}},\qquad
u_j=|C_j|^2=\frac{b_j}{\beta_j}.
\]
Then $0\le u_j\le1$ and $\max u_j=1$. The exact equations \eqref{eq:normalized-codazzi}-\eqref{eq:normalized-div} hold with subscripts $j$, together with $\Tr C_j=\ip{A_j}{C_j}=0$. Lemma \ref{lem:local-elliptic} requires only an upper bound for $|C_j|$. Its estimates are uniform in the basepoints: the graph neighborhoods have a fixed radius by Lemma \ref{lem:immersion-compactness}, the normal spectral gap is uniformly positive, and $A_j$ is uniformly invertible. Indeed, a sequence of points where invertibility degenerates would have a pointed Clifford limit with a zero tangent principal curvature, which is impossible. Thus every fixed derivative of $C_j,\theta_j$ is bounded in these fixed-size neighborhoods. Set $m=n-1$ and $c=(2-n)/\sqrt m$. We use index $0$ for the cylinder direction and indices $a,b=1,\ldots,m$ for the sphere directions. We write the limiting cylinder metric as $dt^2+(m/n)g_{\Sph^m(1)}$ and its first shape operator as $A_0=\diag(-\sqrt m,m^{-1/2},\ldots,m^{-1/2})$. Put
\[
E_j=A_j^2-cA_j-I,\qquad d_j=\max_{M_j}|E_j|.
\]
The arbitrary-basepoint Clifford convergence implies $d_j\to0$ uniformly: otherwise, choosing points where $|E_j|$ stays bounded away from zero and passing to a pointed limit would contradict $A_0^2-cA_0-I=0$. We next establish the rate needed for the first shape operator. The trace identity is
\[
\Tr E_j=\varepsilon_j-2\beta_j u_j,
\qquad \varepsilon_j\le\sqrt n\,d_j+2\beta_j.
\]
At a point $x\in M_j$, let
\[
\begin{aligned}
\mathcal T_x&=\{T\in(T_x^*M_j)^{\otimes3}:T_{ikr}=T_{irk}\},\\
\mathcal W_x&=\{V\in(T_x^*M_j)^{\otimes3}:V_{ikr}=-V_{kir}\}.
\end{aligned}
\]
Both spaces carry the inner products inherited from the full tensor space,
so $\langle T,U\rangle=\sum_{i,k,r}T_{ikr}U_{ikr}$ in an orthonormal
frame; the direct sum carries the sum of these inner products.
For a symmetric endomorphism $A$ of $T_xM_j$, define the pointwise linear map
\[
\begin{aligned}
\mathcal M_A&:\mathcal T_x\longrightarrow\mathcal T_x\oplus\mathcal W_x,\\
\mathcal M_A(T)&=
\left(A_{k\ell}T_{i\ell r}+T_{ik\ell}A_{\ell r}-cT_{ikr},
  \ T_{ikr}-T_{kir}\right),
\end{aligned}
\]
where $i,k,r,\ell$ are tangent indices and $\ell$ is summed.
In particular, $\mathcal M_{A_j}$ denotes this map with $A=A_j(x)$.
Its adjoint
$\mathcal M_A^*:\mathcal T_x\oplus\mathcal W_x\to\mathcal T_x$
is defined by
\[
\langle\mathcal M_A(T),(U,V)\rangle
=\langle T,\mathcal M_A^*(U,V)\rangle
\]
for all $T\in\mathcal T_x$ and $(U,V)\in\mathcal T_x\oplus\mathcal W_x$.
Thus the star denotes the adjoint of a finite-dimensional linear map in
each fiber, with respect to the specified tensor inner products.

At the cylinder operator $A_0$, this map is injective. Indeed, in an eigenbasis of $A_0$ with eigenvalues $-\sqrt m$ in the $0$ direction and $m^{-1/2}$ in the sphere directions, the coefficient in the first component vanishes only when its last two indices lie in different eigenspaces. If $\mathcal M_{A_0}(T)=0$, the second component and the symmetry in the last two indices make $T$ symmetric in all three indices. Among any three indices, two belong to the same eigenspace; placing these in the last two positions shows from the first component that every entry of $T$ vanishes. The argument is invariant under orthogonal changes of tangent frame. On the compact orthogonal orbit of $A_0$, the smallest singular value of $\mathcal M_A$ is positive. Thus, for $A_j$ sufficiently close to that orbit, there is a uniform $\sigma>0$ such that
\[
\langle\mathcal M_{A_j}^*\mathcal M_{A_j}T,T\rangle
=|\mathcal M_{A_j}T|^2\ge\sigma^2|T|^2.
\]
Consequently, $\mathcal M_{A_j}^*\mathcal M_{A_j}$ is a positive-definite
endomorphism of $\mathcal T_x$ and is invertible. The map
\[
(\mathcal M_{A_j}^*\mathcal M_{A_j})^{-1}\mathcal M_{A_j}^*
:\mathcal T_x\oplus\mathcal W_x\longrightarrow\mathcal T_x
\]
is a left inverse of $\mathcal M_{A_j}$, in the precise sense that
\[
\bigl((\mathcal M_{A_j}^*\mathcal M_{A_j})^{-1}
          \mathcal M_{A_j}^*\bigr)\mathcal M_{A_j}
=\operatorname{Id}_{\mathcal T_x}.
\]
This left inverse depends smoothly on $A_j$ and is uniformly bounded;
its local derivatives are controlled by those of $A_j$.
Differentiating $E_j=A_j^2-cA_j-I$ and using the Codazzi equation gives
\[
\mathcal M_{A_j}(\nabla A_j)
=\left(\nabla E_j,
\ \beta_j(\theta_{j,i}C_{j,kr}-\theta_{j,k}C_{j,ir})_{ikr}\right),
\]
where $(\nabla A_j)_{ikr}=\nabla_i(A_j)_{kr}$.
Applying the left inverse yields
\[
\nabla A_j=L_{A_j}(\nabla E_j)+\beta_jR_j.
\]
Here
\[
L_A(Z)=(\mathcal M_A^*\mathcal M_A)^{-1}\mathcal M_A^*(Z,0),
\]
and $R_j$ is the image of $\bigl(0,(\theta_{j,i}C_{j,kr}-\theta_{j,k}C_{j,ir})_{ikr}\bigr)$ under the same left inverse with $A=A_j$. The coefficients of $L_{A_j}$ and all derivatives of $R_j$ are uniformly bounded on fixed graph neighborhoods. By \eqref{eq:tangent-simons-general}, the exact tangent Simons equation is
\begin{equation}\label{eq:normalized-tangent-simons}
\Delta A_j=(-\varepsilon_j+2\beta_j u_j)A_j+\beta_jK_j,
\end{equation}
where
\[
K_j=-[C_j,[C_j,A_j]]+2\sum_i\theta_{j,i}\nabla_iC_j+(\operatorname{div}\theta_j)C_j+|\theta_j|^2A_j.
\]
Every derivative of $K_j$ is uniformly bounded on fixed smaller graph neighborhoods. In particular, each normal-connection correction is a product of two quantities of order $\sqrt{\beta_j}$; there is no term of order $\sqrt{\beta_j}$ alone. Differentiating the polynomial defining $E_j$ gives the exact identity
\begin{equation}\label{eq:polynomial-laplacian-exact}
\Delta E_j=A_j\Delta A_j+(\Delta A_j)A_j-c\Delta A_j
               +2\sum_i(\nabla_iA_j)^2.
\end{equation}
To see the structure of its first-order term without suppressing a quadratic error, write $\nabla_iA_j=(L_{A_j}(\nabla E_j))_i+\beta_jR_{j,i}$ and use
\[
2(\nabla_iA_j)^2=
(\nabla_iA_j)(L_{A_j}(\nabla E_j))_i
+(L_{A_j}(\nabla E_j))_i(\nabla_iA_j)+\beta_j\left((\nabla_iA_j)R_{j,i}
                     +R_{j,i}(\nabla_iA_j)\right).
\]
Equations \eqref{eq:normalized-tangent-simons} and
\eqref{eq:polynomial-laplacian-exact} consequently yield
\[
\Delta E_j-\mathcal B_j(\nabla E_j)=-\varepsilon_j(2A_j^2-cA_j)+\beta_j\mathcal F_j,
\]
where the first-order coefficient $\mathcal B_j$ and the remainder $\mathcal F_j$ have uniformly bounded derivatives. This is a linear uniformly elliptic equation for $E_j$, with the already given geometry regarded as its coefficients. Interior estimates on nested graph neighborhoods, together with $\varepsilon_j\le\sqrt n\,d_j+2\beta_j$, give
\begin{equation}\label{eq:polynomial-interior-estimate}
\|E_j\|_{C^{k+1}(U')}+\|\nabla A_j\|_{C^k(U')}
      \le C_k(d_j+\beta_j),\qquad U'\Subset U.
\end{equation}
Here the constants can be chosen independently of the basepoint. This argument uses only unnormalized geometric derivative bounds and Lemma~\ref{lem:local-elliptic}, and does not assume the rate that it will prove. Suppose, for contradiction, that $d_j/\beta_j$ is unbounded. After passing to a subsequence, we may assume that $d_j/\beta_j\to\infty$. Choose basepoints $x_j\in M_j$ maximizing $|E_j|$ and set $U_j=E_j/d_j$. Passing to a subsequence with $\varepsilon_j/d_j\to\kappa_0$, we obtain a complete pointed limit with basepoint $x_\infty$ satisfying
\[
|U|\le1,\quad |U(x_\infty)|=1,\quad
\Delta U=-\kappa_0(2I+cA_0),\quad \Tr U=\kappa_0.
\]
Indeed, \eqref{eq:polynomial-interior-estimate} gives $|\nabla A_j|^2/d_j=O((d_j+\beta_j)^2/d_j)\to0$, while every $\beta_j$-term in \eqref{eq:polynomial-laplacian-exact} vanishes after division by $d_j$. The trace of the limiting equation is
\[
0=\Delta(\Tr U)=-\kappa_0\Tr(2I+cA_0)=-2n\kappa_0.
\]
Thus $\kappa_0=0$, $\Delta U=0$, and $\Tr U=0$. Let $P_j$ be the orthogonal projection onto the tangent eigenline of $A_j$ corresponding to the eigenvalue near $-\sqrt m$. Since $E_j$ is a polynomial in $A_j$, it has no mixed components in the splitting determined by $P_j$. If the eigenvalues of $A_j$ are written as $-\sqrt m+\delta_0$ and $m^{-1/2}+\delta_a$, with $\delta_0+\sum_a\delta_a=0$, then, since $2(-\sqrt m)-c=-n/\sqrt m$ and $2m^{-1/2}-c=n/\sqrt m$,
\[
2\Tr(P_jE_j)-\Tr E_j=\delta_0^2-\sum_a\delta_a^2=O(|E_j|^2).
\]
The last estimate follows from the simple roots of $t^2-ct-1$: each $|\delta_a|$, including $|\delta_0|$, is bounded by a uniform constant times $|E_j|$. After division by $d_j$, this gives $U_{00}=0$. The projectors $P_j$ converge smoothly locally to the parallel projection onto the cylinder direction. Therefore $U$ has no mixed components and is a bounded trace-free symmetric tensor entirely in the sphere directions. Choose a cutoff $\chi_R(t)$ on the cylinder, equal to one on $[-R,R]$, supported in $[-2R,2R]$, and satisfying $|\chi_R'|\le C/R$. Integration by parts gives
\[
\int\chi_R^2|\nabla U|^2
\le4\int|d\chi_R|^2|U|^2\le \frac CR.
\]
Letting $R\to\infty$ shows that $U$ is parallel. Its restriction to the round sphere factor is therefore parallel. A parallel self-adjoint endomorphism commutes with every curvature endomorphism. On a round sphere of dimension $m\ge2$, the curvature endomorphisms generate all infinitesimal rotations. Such an endomorphism is a scalar multiple of the identity. Its trace is zero, so it vanishes. This contradicts $|U(x_\infty)|=1$. We have proved
\begin{equation}\label{eq:q2-onlyQ-invariant-rate}
d_j+\varepsilon_j=O(\beta_j),\qquad
\|\nabla A_j\|_{C^k(U')}=O(\beta_j).
\end{equation}
In particular $T_{1,j}=\nabla A_j-\omega_j\otimes B_j=O(\beta_j)$. Choose basepoints $x_j\in M_j$ where $b_j=\beta_j$ and pass to a complete normalized pointed limit with basepoint $x_\infty$. The limiting cylinder is simply connected. Lemma~\ref{lem:q2-global-potential} therefore gives a global scalar Jacobi potential directly. The bound $|C|\le1$ on the entire cylinder ensures the uniform transverse $L^2$ bound required in Lemma~\ref{thm:jacobi}. Hence there is a nonzero $w\in\R^n$ such that, with $x\in\Sph^m(1)$ and an orthonormal product frame for the scaled cylinder metric,
\[
C_{00}=0,\quad C_{0a}=\nabla_a(w\cdot x),\quad C_{ab}=0,
\qquad \theta=-\frac n{\sqrt m}(w\cdot x)\,dt.
\]
The vector is nonzero because $|C(x_\infty)|=1$. Write $H=ww^T$, $\mathfrak h(x)=x^THx$, $\overline{\mathfrak h}=\Tr H/n$, and
\[
u=|C|^2=\frac{2n}{m}(\Tr H-\mathfrak h),\qquad
v=|\theta|^2=\frac{n^2}{m}\mathfrak h,
\qquad \Delta \mathfrak h=-\frac{2n^2}{m}(\mathfrak h-\overline{\mathfrak h}).
\]
The function $u$ need not be constant. By \eqref{eq:q2-onlyQ-invariant-rate}, $\varepsilon_j/\beta_j$ is bounded, so after passing to a subsequence we may assume that
\[
\frac{\varepsilon_j}{\beta_j}\to\kappa.
\]
Equation \eqref{eq:eigenvalue-laplacian-general} then gives
\[
\frac12\Delta a_j=|T_{1,j}|^2+a_j(n-a_j)-|[A_j,B_j]|^2+(a_j-b_j)|\omega_j|^2.
\]
Since $a_j=n+\varepsilon_j-2\beta_j u_j$, division by $\beta_j$ gives $-\Delta u$ on the left in the limit. Since $C$ has only mixed components and the difference between the two eigenvalues of $A_0$ is $-n/\sqrt m$,
\[
\frac{|[A_j,B_j]|^2}{\beta_j}\longrightarrow |[A_0,C]|^2=\frac{n^2}{m}u.
\]
Also, $|T_{1,j}|^2/\beta_j=O(\beta_j)\to0$. All normalized tensors converge in $C^2$ on compact sets, so the Laplacian may be passed to the limit. Hence
\[
-\Delta u=n(2u-\kappa)-\frac{n^2}{m}u+nv,
\qquad \kappa=\frac{\Delta u}{n}+\frac{n-2}{m}u+v.
\]
Substitution yields
\[
\kappa=\frac{n(3n-4)}m\overline{\mathfrak h}
     +\frac{n(n^2+n+4)}{m^2}(\mathfrak h-\overline{\mathfrak h}).
\]
Since $\kappa$ is constant and the second coefficient is positive, $\mathfrak h$ is constant on the entire transverse sphere. Hence $H$ is a scalar matrix. This contradicts $H=ww^T\ne0$ and $n\ge3$.
\end{proof}

\raggedbottom
\par\bigskip
\noindent\textbf{Acknowledgements.}

Fagui Li is partially supported by NSFC (No. 12271040 and 12501061), the Guangdong Provincial Association for Science and Technology Youth Talent Support Program (No. SKXRC2026424), and the Research Start-up Funding of Beijing Institute of Technology (No. 5640011253301).

\raggedbottom
\par\bigskip
\noindent\textbf{AI Disclosure.}

The authors used OpenAI GPT-6 Astra  to assist with developing and checking mathematical arguments and with manuscript preparation. The authors are responsible for verifying all arguments and references and for the content and final presentation.


\begin{thebibliography}{99}
\bibitem{ADN}
S.~Agmon, A.~Douglis, and L.~Nirenberg, \emph{Estimates near the boundary for solutions of elliptic partial differential equations satisfying general boundary conditions. II}, Comm. Pure Appl. Math. \textbf{17} (1964), 35--92. \href{https://doi.org/10.1002/cpa.3160170104}{DOI}.

\bibitem{CatinoRoncoroni24}
G.~Catino and A.~Roncoroni, \emph{A closure result for globally hyperbolic spacetimes}, Proc. Amer. Math. Soc. \textbf{152} (2024), 5339--5354. \href{https://doi.org/10.1090/proc/16969}{DOI}.

\bibitem{Chang93}
S. P. Chang, \emph{On minimal hypersurfaces with constant scalar curvatures in $\Sph^4$}, J. Differential Geom. \textbf{37} (1993), 523--534. \href{https://doi.org/10.4310/jdg/1214453898}{DOI}.

\bibitem{ChenXu}
Q. Chen and S. L. Xu, \emph{Rigidity of compact minimal submanifolds in a unit sphere}, Geom. Dedicata \textbf{45} (1993), 83--88. \href{https://doi.org/10.1007/BF01667404}{DOI}.

\bibitem{CDK}
S. S. Chern, M. do Carmo, and S. Kobayashi, \emph{Minimal submanifolds of a sphere with second fundamental form of constant length}, in \emph{Functional Analysis and Related Fields}, Springer, 1970, 59--75. \href{https://doi.org/10.1007/978-3-642-48272-4_2}{DOI}.

\bibitem{DengGuWei17}
Q. T. Deng, H. L. Gu, and Q. Y. Wei, \emph{Closed Willmore minimal hypersurfaces with constant scalar curvature in $\Sph^5(1)$ are isoparametric}, Adv. Math. \textbf{314} (2017), 278--305. \href{https://doi.org/10.1016/j.aim.2017.05.002}{DOI}.

\bibitem{DGLY}
W. R. Ding, J. Q. Ge, F. G. Li, and X. Z. Yang, \emph{Lu's conjecture for minimal surfaces}, preprint, \href{https://arxiv.org/abs/2601.07194}{arXiv:2601.07194}, 2026.

\bibitem{DingXin11}
Q. Ding and Y. L. Xin, \emph{On Chern's problem for rigidity of minimal hypersurfaces in the spheres}, Adv. Math. \textbf{227} (2011), 131--145. \href{https://doi.org/10.1016/j.aim.2011.01.018}{DOI}.

% Audit note (2026-09-26): Full text unavailable in this audit: the precise density statement and equation-(9) identification in the Introduction are retained from the supplied manuscript and require source-level confirmation.
\bibitem{FT26}
B. Firester and R. Tsiamis, \emph{On Chern's conjecture for minimal submanifolds of the sphere}, preprint, \href{https://arxiv.org/abs/2608.18074}{arXiv:2608.18074}, 2026.

\bibitem{GLZLuSurfaces}
J. Q. Ge, F. G. Li, and Y. H. Zhang, \emph{Lu's conjecture for minimal surfaces in codimension two}, preprint, \href{https://arxiv.org/abs/2607.21336v2}{arXiv:2607.21336v2}, 2026.

\bibitem{GLZFlatGap}
J. Q. Ge, F. G. Li, and Y. H. Zhang, \emph{On Chern's conjecture for minimal submanifolds with flat normal bundle in spheres}, preprint, \href{https://arxiv.org/abs/2607.10733}{arXiv:2607.10733}, 2026.

\bibitem{GeLiuLuoYan26}
J. Q. Ge, T. Liu, K. Y. Luo, and W. J. Yan, \emph{Rigidity of closed minimal hypersurfaces in $\Sph^5$}, preprint, \href{https://arxiv.org/abs/2606.29246}{arXiv:2606.29246}, 2026.

\bibitem{GTYZ26}
J. Q. Ge, H. X. Tan, W. J. Yan, and Y. H. Zhang, \emph{The second gap and rigidity in Chern's conjecture with constant cubic trace}, preprint, \href{https://arxiv.org/abs/2609.27661}{arXiv:2609.27661}, 2026.

\bibitem{Guan26}
S. L. Guan, \emph{The second gap for minimal hypersurfaces in the unit sphere with constant cubic trace}, preprint, \href{https://arxiv.org/abs/2609.26852}{arXiv:2609.26852}, 2026.

\bibitem{HeXuZhao26}
C. C. He, H. W. Xu, and E. T. Zhao, \emph{Classification of closed minimal hypersurfaces with constant scalar curvature in $\Sph^5$}, preprint, \href{https://arxiv.org/abs/2603.01181}{arXiv:2603.01181}, 2026.

% Audit note (2026-09-26): Metadata and the general pinching topic were verified from the arXiv abstract; the exact scope of the claimed threshold improvement needs a full-text check.
\bibitem{Lei26}
L. Lei, \emph{Rigidity of minimal submanifolds in spheres of higher codimension}, preprint, \href{https://arxiv.org/abs/2609.04393}{arXiv:2609.04393}, 2026.

\bibitem{LeiXuXu17}
L. Lei, H. W. Xu, and Z. Y. Xu, \emph{On Chern's conjecture for minimal hypersurfaces in spheres}, preprint, \href{https://arxiv.org/abs/1712.01175}{arXiv:1712.01175}, 2017.

\bibitem{LiLi}
A. M. Li and J. M. Li, \emph{An intrinsic rigidity theorem for minimal submanifolds in a sphere}, Arch. Math. (Basel) \textbf{58} (1992), 582--594. \href{https://doi.org/10.1007/BF01193528}{DOI}.

% Audit note (2026-09-26): Version-specific check pending: the supplied citation is v3 with two authors; the retrievable base record listed v1 with one author. Do not replace v3 authorship using v1 metadata.
\bibitem{LiZhaoGap}
F. G. Li and Y. H. Zhao, \emph{A curvature gap for minimal submanifolds in spheres}, preprint, \href{https://arxiv.org/abs/2608.16095v3}{arXiv:2608.16095v3}, 2026.

\bibitem{LiZhangSpiral26}
H. Z. Li and Y. S. Zhang, \emph{The geometry and dynamics of spiral minimal products}, preprint, \href{https://arxiv.org/abs/2608.02370v2}{arXiv:2608.02370v2}, 2026.

% Audit note (2026-09-26): The accessible base abstract supports the stated flat-torus conclusions; the explicitly cited v2 was not independently retrieved.
\bibitem{LZ26}
F. G. Li and Y. H. Zhao, \emph{Flat minimal tori and Lu's second-gap conjecture}, preprint, \href{https://arxiv.org/abs/2606.30432v2}{arXiv:2606.30432v2}, 2026.

% Audit note (2026-09-26): The title/authors and general pinching topic were found in the arXiv record; the exact truncated eigenvalue sum and equality cases in the Introduction require full-text confirmation.
\bibitem{LiuYang26}
H. M. Liu and L. Yang, \emph{An optimal pinching theorem on compact minimal submanifolds in the Euclidean spheres via eigenvalues of fundamental matrices}, preprint, \href{https://arxiv.org/abs/2609.26309}{arXiv:2609.26309}, 2026.

\bibitem{Lu11}
Z. Q. Lu, \emph{Normal scalar curvature conjecture and its applications}, J. Funct. Anal. \textbf{261} (2011), 1284--1308. \href{https://doi.org/10.1016/j.jfa.2011.05.002}{DOI}.

\bibitem{MMRS}
M.~Magliaro, L.~Mari, F.~Roing, and A.~Savas-Halilaj, \emph{Sharp pinching theorems for complete submanifolds in the sphere}, J. Reine Angew. Math. \textbf{814} (2024), 117--134. \href{https://doi.org/10.1515/crelle-2024-0042}{DOI}.

\bibitem{PengTerng}
C.-K.~Peng and C.-L.~Terng, \emph{Minimal hypersurfaces of spheres with constant scalar curvature}, in \emph{Seminar on Minimal Submanifolds}, E. Bombieri (ed.), Ann. of Math. Stud. \textbf{103}, Princeton University Press, Princeton, NJ, 1983, 177--198.

\bibitem{PengTerng83}
C.-K.~Peng and C.-L.~Terng, \emph{The scalar curvature of minimal hypersurfaces in spheres}, Math. Ann. \textbf{266} (1983), 105--113. \href{https://doi.org/10.1007/BF01458707}{DOI}.

\bibitem{Petersen}
P.~Petersen, \emph{Riemannian Geometry}, third edition, Graduate Texts in Mathematics \textbf{171}, Springer, 2016. \href{https://doi.org/10.1007/978-3-319-26654-1}{DOI}.

\bibitem{Simons}
J. Simons, \emph{Minimal varieties in Riemannian manifolds}, Ann. of Math. (2) \textbf{88} (1968), 62--105. \href{https://doi.org/10.2307/1970556}{DOI}.

\bibitem{suh_yang_2007}
Y. J. Suh and H. Y. Yang, \emph{The scalar curvature of minimal hypersurfaces in a unit sphere}, Commun. Contemp. Math. \textbf{9} (2007), 183--200.

\bibitem{TTXY26}
H. X. Tan, Z. Z. Tang, Y. Q. Xie, and W. J. Yan, \emph{Chern's conjecture with constant cubic trace}, preprint, \href{https://arxiv.org/abs/2609.03711}{arXiv:2609.03711}, 2026.

\bibitem{TangWeiYan20}
Z. Z. Tang, D. Y. Wei, and W. J. Yan, \emph{A sufficient condition for a hypersurface to be isoparametric}, Tohoku Math. J. (2) \textbf{72} (2020), 493--505. \href{https://doi.org/10.2748/tmj.20190611}{DOI}.

\bibitem{TangYan23}
Z. Z. Tang and W. J. Yan, \emph{On the Chern conjecture for isoparametric hypersurfaces}, Sci. China Math. \textbf{66} (2023), 143--162. \href{https://doi.org/10.1007/s11425-022-1967-4}{DOI}.

\bibitem{XuZhao26}
H. W. Xu and E. T. Zhao, \emph{The pinching constant for closed minimal submanifolds of high codimension in the sphere}, preprint, \href{https://arxiv.org/abs/2609.04631}{arXiv:2609.04631}, 2026.

\bibitem{yang_cheng_1998}
H. C. Yang and Q. M. Cheng, \emph{Chern's conjecture on minimal hypersurfaces}, Math. Z. \textbf{227} (1998), 377--390. \href{https://doi.org/10.1007/PL00004382}{DOI}.
\end{thebibliography}
\end{document}